\theoremstyle{definition}
\newtheorem{theorem}{Theorem}
\newtheorem{lemma}{Lemma}
\newtheorem{sub-lemma}{Sub-lemma}
\newtheorem{proposition}{Proposition}
\newtheorem{definition}{Definition}
\newtheorem{remark}{Remark}
\begin{document}
\title{\normalsize \textbf{\MakeUppercase{Tangent Space of the Stable and Unstable Manifold of Anosov Diffeomorphism on 2-Torus}}}

\author{Federico Bonneto, Jack Wang, Vishal Kumar}
\date{June 2024}

\begin{abstract}
    In this paper we describe the tangent vectors of the stable and unstable manifold of a class of Anosov diffeomorphisms on the torus $\mathbb{T}^2$ using the method of formal series and derivative trees. We start with linear automorphism that is hyperbolic and whose eigenvectors are orthogonal. Then we study the perturbation of such maps by trigonometric polynomial.  It is known \cite{LinearOperators} that there exist a (continuous) map $H$ which acts as a change of coordinate between the perturbed and unperturbed system, but such a map is in general, not differentiable. By "re-scaling" the parametrization $H$, we will be able to obtain the explicit formula for the tangent vectors of these maps.
    \end{abstract}
    
\maketitle

\section{Introduction}
Let $\mathbb{T}^2 = \mathbb{R}^2/2 \pi \mathbb{Z}^2$ be the two dimensional torus. The map $S: \mathbb{T}^2 \rightarrow \mathbb{T}^2$ defined as
    $$S (\psi) = S_0 \psi \pmod{2\pi}, \text{ where } S_0 = \begin{bmatrix}
1 &1 \\ 
1 &0 
\end{bmatrix},$$
has eigenvalues 
$$\lambda_\pm = \frac{1 \pm \sqrt{5}}{2}.$$ Let $v_{\pm}$ be the (normalized) eigenvectors of $S_0$ associated to each eigenvectors \footnote{Note that the theory developed in this paper remains valid for any diagonalizable integer valued matrix with determinant 1 such that the two eigenvalues has modules strictly greater and less than 1, respectively, but we will perform the computations on  this particular example.}. Note that $v_+$ and $v_-$ are orthogonal. It is easy to verify that $S$ is a diffeomorphism. The map $S$ is a simple example of an Anosov diffeomorphism:

\begin{definition}
\label{Anosov}
    Let $M$ be a manifold together with a metric $|| \cdot ||$. A diffeomorphism $F:M \rightarrow M$ is said to be $\textbf{Anosov}$ if \\
        \indent(a) There exist two continuous vector bundles $E^u$ and $E^s$, called the unstable and stable bundle, such that for each $p \in M$, one has $T_pM = E^u_p \oplus E^s_p$; \\
        \indent(b) The bundles $E^u$ and $E^s$ are invariant under the differential $dF$, in the sense that $(dF)_p E^u(p) = E^u(F(p))$ and $(dF)_p E^s(p) = E^s(F(p))$ for all $p \in M;$\\
        \indent(c) (Hyperbolicity) There exist $C>0$ and $\lambda \in (0,1)$ such that for all $n \in \mathbb{N}, p \in M,$ we have 
        \[||(dF^n)_p v|| \leq C \lambda^n ||v|| \text{  if  } v \in E^s(p);\]
        \[||(dF^{-n})_p v|| \leq C \lambda^n ||v|| \text{  if  } v \in E^u(p).\]
        \indent(d) (Density) there exists some point $p \in M$ such that the set $\{ F^{n}(p) | n \in \mathbb{Z}\}$ is dense in $M$.
\end{definition}
Because $|\lambda_+| > 1$ and $|\lambda_-|<1$,  $S$ is Anosov simply by announcing the splitting of tangent spaces to be in the two directions of the corresponding eigenvectors $v_{\pm}$. Once the splitting is determined, item (b) and (c) in definition \ref{Anosov} are trivially satisfied. A crucial fact about Anosov diffeomorphism is the following well-known property, whose proof could be found in \cite{wen}:

\begin{theorem}
    With the notation as in Definition \ref{Anosov}, the stable and unstable set (also called stable and unstable manifold) on $M$ under $F$ relative to $x\in M$, defined respectively as
    \[W^s(x) := \{y \in M: d(F^ny, F^nx) \rightarrow 0 \text{ as } n \rightarrow \infty\}\]
    \[W^u(x):= \{y \in M: d(F^{-n}y, F^{-n}x) \rightarrow 0 \text{ as } n \rightarrow \infty\}\]
    are manifolds with regularity as high as $F$. Furthermore, for every $p\in M$, one has
    \[T_p W^s(p) = E^s(p), T_p W^u(p) = E^u(p).\]
\end{theorem}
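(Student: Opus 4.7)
The plan is to establish the theorem by the classical Hadamard graph transform method: first construct a local stable manifold $W^s_{\mathrm{loc}}(p)$ through each point $p$ with the required regularity and tangent space, and then globalize by iteration. I handle the unstable case symmetrically by running the same argument for $F^{-1}$.

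For the local construction I introduce adapted coordinates around $p$ identifying $T_pM$ with $E^u_p \oplus E^s_p$, so that $F$ takes the form
\[
F(x,y) = \bigl(A_p x + \phi(x,y),\ B_p y + \psi(x,y)\bigr),
\]
with $\|A_p^{-1}\|,\|B_p\|\le\lambda$ and with $\phi,\psi$ together with their first derivatives vanishing at the origin. The local stable manifold is then sought as the graph $\{(h_p(y),y):y\in E^s_p,\ |y|<\delta\}$ of a function $h_p$ satisfying $h_p(0)=0$ and having small Lipschitz constant. The invariance relation $F(W^s_{\mathrm{loc}}(p))\subset W^s_{\mathrm{loc}}(F(p))$ rewrites as a functional equation linking $h_p$ and $h_{F(p)}$, and the associated graph-transform operator is a contraction (rate governed by $\lambda$) on the Banach space of such small-Lipschitz functions. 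The Banach fixed point theorem then produces a unique continuous family of graphs, i.e.\ a continuous local stable manifold at every point.

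The core technical step is to upgrade this Lipschitz solution to a $C^k$ manifold whenever $F$ is $C^k$. I would do this by a fiber contraction argument: formally differentiating the functional equation in $y$ produces a linear equation for $Dh_p$ that, viewed as a contraction on continuous linear maps fibered over the already-constructed $h_p$, has its own fixed point, which one then verifies is really the derivative of $h_p$. Iterating on higher-order derivatives extends the argument up to the full $C^k$ regularity of $F$. This regularity bootstrap---in particular the passage from a formally defined candidate derivative to an honest derivative---is the main technical obstacle of the proof.

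For the tangent space identification, differentiating the functional equation at $y=0$ and using $\phi(0,0)=\psi(0,0)=0$ together with $D\phi(0,0)=D\psi(0,0)=0$ yields $Dh_p(0)=0$, so $T_pW^s_{\mathrm{loc}}(p)=\{0\}\oplus E^s_p$, which is exactly $E^s_p$ under the chosen identification. Globalization is then immediate from $W^s(p)=\bigcup_{n\ge 0} F^{-n}\bigl(W^s_{\mathrm{loc}}(F^n(p))\bigr)$, which realizes $W^s(p)$ as an increasing union of $C^k$ submanifolds glued by the diffeomorphism $F^{-1}$, preserving both the regularity and the tangent-space identification at $p$; the same construction applied to $F^{-1}$ yields $W^u(p)$ with $T_pW^u(p)=E^u_p$.
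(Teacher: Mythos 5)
The paper does not prove this theorem at all: it is quoted as a known fact about Anosov diffeomorphisms with a pointer to the literature (the reference cited just before the statement), so there is no in-paper argument to compare yours against. What you propose is the classical Hadamard graph-transform proof, which is essentially the argument found in the standard references, so your route is the expected one rather than a genuinely different one.

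As written, however, your text is an outline rather than a proof, and the parts left open are exactly the ones that carry the weight. First, the regularity upgrade from Lipschitz graphs to $C^k$ graphs via fiber contraction --- which you yourself flag as ``the main technical obstacle'' --- is only described, not executed; the verification that the fixed point of the formally differentiated equation really is $Dh_p$, and the induction to higher derivatives, is where most of the work in any complete proof lives. Second, the normalization $\|A_p^{-1}\|,\|B_p\|\le\lambda$ is not available directly from Definition \ref{Anosov} because of the constant $C$; you need an adapted (Lyapunov) metric or a fixed iterate $F^N$ first, and since the splitting $E^u\oplus E^s$ is only continuous in $p$, the smallness of $\phi,\psi$ and the contraction constants must be made uniform over the orbit (here compactness, or in the paper's setting the torus, is what saves you --- Definition \ref{Anosov} as stated does not assume it). Third, the globalization identity $W^s(p)=\bigcup_{n\ge 0}F^{-n}\bigl(W^s_{\mathrm{loc}}(F^n(p))\bigr)$ needs the standard but nontrivial argument that any $y$ with $d(F^ny,F^np)\to 0$ eventually enters the uniform-size local stable manifold of $F^n(p)$ (convergence forces exponential convergence via hyperbolicity), and the resulting global $W^s(p)$ is in general only an injectively immersed submanifold, which is the sense in which the theorem's ``are manifolds'' should be read. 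None of these is a wrong turn --- they are the gaps between a correct strategy and a proof.
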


This theorem shows that, if one choose a (continuous) vector field $v^s(p)$ on $M$ so that $v^s(p) \in E^s(p)$ for every $p \in M$, then the integral curve generated by $v^s(p)$ is simply the stable manifold, provided that we have uniqueness of the flow. The same is true for the unstable manifold. For example, it is easy to check that the stable and unstable manifolds of $S$ are given by $W^s_0(\psi) = \{\psi+tv_- \pmod{2\pi} : t \in \mathbb{R}\}$ and $W^u_0(\psi) = \{\psi+tv_+ \pmod{2\pi} : t \in \mathbb{R}\}$, respectively. \\

Now consider a small perturbation $S_{\varepsilon}$ of $S$ on $\mathbb{T}^2$ given by
\[S_{\varepsilon} (v)=S_0 (v)-\varepsilon f(v) \pmod{2\pi}\]
for some small $\varepsilon>0$, where $f$ is a trigonometric polynomial $f(v)=\sum_{\textbf{n} \in \mathbb{Z}^2, |\textbf{n}|<N} \textbf{c}_ne^{i\textbf{n} \cdot v}$. Since $f$ is $2\pi$ periodic, $S_{\varepsilon}$ is well defined. It is known that a $C^1$ perturbation of an Anosov Diffeomorphism remains Anosov. (See \cite{cooper2021} for proof). We will derive an explicit formula for the tangent vectors of the stable and unstable manifold of $S_{\epsilon}.$ \\
\indent For $\varepsilon$ small enough, it is possible to construct a continuous "change of coordinate" between $S$ and $S_{\varepsilon}$, known as structural stability 
 \cite{LinearOperators}: 

\begin{proposition}
\label{conjugation theorem}
With the notation as above, for every $\beta \in (0,1),$ there exist constants $C < \infty$ and $\varepsilon_0 > 0$ (both depends on $\beta$) such that for each $\varepsilon \in (-\varepsilon_0, +\varepsilon_0)$ there exist a unique homeomorphism $H_{\varepsilon}:\mathbb{T}^2 \rightarrow \mathbb{T}^2$ satisfying 
\[H_{\varepsilon} S H_{\varepsilon}^{-1} = S_{\varepsilon}, H_0=\text{id},\]
with the property that\\ 
\indent (i) $H_{\varepsilon} (\varphi)$ is Hölder continuous in $\varphi$ with exponent $\beta$ and modulus $C$;\\
\indent (ii) the correspondence $\varepsilon \rightarrow H_{\varepsilon}(\varphi)$ is analytic for all $\varphi \in \mathbb{T}^2$. \\
\indent (iii) Denote $W^s_{\epsilon}(x),W^u_{\epsilon}(x)$ the stable and unstable manifold of $S_{\epsilon}$. Then $W^s_{\varepsilon}(H_{\varepsilon}(\psi))=H_{\varepsilon}(W_0^s(\psi));$ $W^u_{\varepsilon}(H_{\varepsilon}(\psi))=H_{\varepsilon}(W_0^u(\psi)).$
\end{proposition}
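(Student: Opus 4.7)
The plan is to reformulate the conjugacy equation $H_\varepsilon \circ S = S_\varepsilon \circ H_\varepsilon$ as a fixed-point problem. Lifting everything to $\mathbb{R}^2$ and writing $H_\varepsilon(\psi) = \psi + h_\varepsilon(\psi)$ with $h_\varepsilon : \mathbb{T}^2 \to \mathbb{R}^2$ continuous, the equation reduces to
\begin{equation*}
\mathcal{L} h_\varepsilon \;:=\; h_\varepsilon \circ S - S_0\, h_\varepsilon \;=\; -\varepsilon\, f\circ (\mathrm{id} + h_\varepsilon).
\end{equation*}
Decomposing $h = h^+ v_+ + h^- v_-$ in the orthogonal eigenbasis of $S_0$ splits $\mathcal{L}$ into two scalar cohomology operators $T_\pm g = g \circ S - \lambda_\pm g$. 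Since $|\lambda_+|>1$ and $|\lambda_-|<1$, each $T_\pm$ is inverted by a geometric series:
\begin{equation*}
T_+^{-1} g = -\sum_{n=0}^\infty \lambda_+^{-(n+1)}\, g \circ S^n, \qquad T_-^{-1} g = \sum_{n=0}^\infty \lambda_-^n\, g \circ S^{-(n+1)}.
\end{equation*}

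To establish (i), the first step is to work in the Banach space $X = C^\beta(\mathbb{T}^2;\mathbb{R}^2)$. Since $S$ and $S^{-1}$ each expand distances by at most $\lambda_+$ (using $|\lambda_-|^{-1} = \lambda_+$), the Hölder seminorm satisfies $[g \circ S^{\pm n}]_\beta \leq \lambda_+^{\, n\beta}\,[g]_\beta$. The two series above therefore converge in $X$ precisely when $\beta < 1$, yielding a bounded operator $\mathcal{L}^{-1}: X \to X$. The map $\Phi_\varepsilon(h) := -\varepsilon\, \mathcal{L}^{-1}\!\left(f \circ (\mathrm{id} + h)\right)$ is then a contraction on a small closed ball $B_r \subset X$ for every $|\varepsilon|<\varepsilon_0$, because $f$ is a trigonometric polynomial and therefore Lipschitz on $B_r$. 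Banach's theorem supplies a unique solution $h_\varepsilon \in B_r$, and the homeomorphism property (together with uniqueness) follows by running the same argument for a putative inverse $G_\varepsilon = \mathrm{id} + g_\varepsilon$ satisfying $G_\varepsilon \circ S_\varepsilon = S \circ G_\varepsilon$ and checking that $G_\varepsilon \circ H_\varepsilon$ solves the fixed-point equation for the identity. For (ii), observe that $F(\varepsilon,h) := h + \varepsilon\, \mathcal{L}^{-1}(f \circ (\mathrm{id}+h))$ is jointly analytic in the Banach sense: polynomial in $\varepsilon$, and analytic in $h$ because $f$ extends to an entire function on $\mathbb{C}^2$. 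Since $\partial_h F(0,0) = I$, the analytic implicit function theorem in Banach spaces yields that $\varepsilon \mapsto h_\varepsilon \in X$ is analytic; evaluating at a fixed $\varphi$ gives (ii).

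Item (iii) is a soft consequence of the conjugation. Iterating $S_\varepsilon \circ H_\varepsilon = H_\varepsilon \circ S$ gives $S_\varepsilon^n \circ H_\varepsilon = H_\varepsilon \circ S^n$ for all $n \in \mathbb{Z}$, so using the Hölder bound on $H_\varepsilon$, for $\phi \in W_0^s(\psi)$ one has
\begin{equation*}
d(S_\varepsilon^n H_\varepsilon(\phi),\, S_\varepsilon^n H_\varepsilon(\psi)) \;=\; d(H_\varepsilon(S^n\phi),\, H_\varepsilon(S^n\psi)) \;\leq\; C\, d(S^n\phi, S^n\psi)^\beta \;\longrightarrow\; 0,
\end{equation*}
showing $H_\varepsilon(W_0^s(\psi)) \subseteq W_\varepsilon^s(H_\varepsilon(\psi))$. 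The opposite inclusion comes from the same argument applied to $H_\varepsilon^{-1}$, and the unstable case is identical with $n \to -\infty$. The main obstacle is item (i): the estimate $[g \circ S^{\pm n}]_\beta \leq \lambda_+^{n\beta}[g]_\beta$ combined with the $\lambda_\pm^{\mp n}$ weights in $\mathcal{L}^{-1}$ forces $\beta < 1$ strictly and controls the constants $C$ and $\varepsilon_0$. This threshold is sharp and explains why $H_\varepsilon$ fails to be differentiable in general — precisely the obstruction that motivates the re-scaling of $H$ developed later in the paper.
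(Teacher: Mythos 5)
The paper does not actually prove this proposition: it is imported wholesale from \cite{LinearOperators}, and the only argument offered in the text is the informal computation sketching why (iii) should hold. Your proposal therefore supplies a proof where the paper has none, and it is the standard (and correct) one: the reduction to the twisted cohomological equation $h\circ S - S_0 h = -\varepsilon f\circ(\mathrm{id}+h)$, the inversion of $T_\pm$ by geometric series, and the contraction/implicit-function argument in $C^\beta$ all check out, and the loss from $[g\circ S^{\pm n}]_\beta \leq \lambda_+^{n\beta}[g]_\beta$ correctly identifies why one gets Hölder regularity for every $\beta<1$ but not Lipschitz. It is worth noting that your first-order formula $T_-^{-1}(-f_-)=\sum_{p=-1}^{-\infty}-\lambda_-^{|p+1|}f_-(S^p\psi)$ reproduces exactly the paper's expression for $h^{(1)}_-$, and iterating your fixed-point equation as a power series in $\varepsilon$ is precisely what generates the labelled-tree expansion (\ref{h^(k)}) with its $\Lambda$-contributions $\lambda_{\alpha_v}^{-|p_v+1|\alpha_v}$; so your route is the natural companion to the rest of the paper rather than a divergence from it. Two points deserve slightly more care than your sketch gives them. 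First, the Lipschitz estimate for $h\mapsto f\circ(\mathrm{id}+h)$ must be taken in the $C^\beta$ norm, not just the sup norm; this works because $C^\beta(\mathbb{T}^2)$ is a Banach algebra and $f$ is a trigonometric polynomial, but it should be said. Second, your construction of the inverse $G_\varepsilon$ requires inverting the operator $g\mapsto g\circ S_\varepsilon - S_0\,g$, which is a cohomological equation over the perturbed map $S_\varepsilon$ rather than over $S$; the geometric-series inversion still goes through for small $\varepsilon$ (the Lipschitz constants of $S_\varepsilon^{\pm 1}$ are close to $\lambda_+$), but it is not literally the same operator $\mathcal{L}^{-1}$, and after forming $G_\varepsilon\circ H_\varepsilon=\mathrm{id}+k$ you still need the observation that $k\circ S=S_0k$ forces $k^\pm=0$ by hyperbolicity. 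With those details filled in, the argument is complete, and your treatment of (iii) is the rigorous two-inclusion version of the paper's own heuristic.
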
 
    
To give some intuition why one expects (iii) to be true, note that if we pick $y \in H_{\varepsilon}(W_0^s(\psi))$, then $y=H_{\varepsilon}(\psi+t^{*}v)$ for some $t^*$, and 
\[d(S_{\varepsilon}^n y, S_{\varepsilon}^nH_{\varepsilon}(\psi))=|H_{\varepsilon}S^n(\psi+t^* v_{-})-H_{\varepsilon}S^n(\psi)|=|H_{\varepsilon}(S^n\psi+\lambda_{-}^nt^* v_{-})-H_{\varepsilon}S^n(\psi)|,\]
which $\rightarrow 0$ as $n \rightarrow \infty$ (The corresponding property is also true for the unstable set). Furthermore, since $W^s_{0}(\psi)$ (resp.$W^u_{0}(\psi)$) is dense in $\mathbb{T}^2$ (as $v_{\pm}$ has irrational coordinates) and $H_{\varepsilon}$ is continuous, $W^s_{\varepsilon}(\psi)$ (resp. $W^u_{\varepsilon}(\psi)$) is dense in $\mathbb{T}^2$ as well. \\
\indent In what follows, we will sometimes abbreviate $H_{\varepsilon}$ simply as $H$, depending on how much emphasis we would like to put on the number $\varepsilon.$

\section{Discussion of the Main Problem, and the Main Result}
Our goal is to find the tangent vector of the sets $W^s_{\varepsilon}(x)$ and $W^u_{\varepsilon}(x).$ With the help of proposition \ref{conjugation theorem} (iii), the most natural thing to do is to differentiate the parametrization $t \mapsto H_{\varepsilon}(\psi+tv^{\pm})$. However, this is generality not possible. To see why, note that in general we should expect
\begin{equation}
\label{long limit formula}
    \partial_- H(\psi) \text{=}\lim_{t \rightarrow 0} \frac{H(\psi+tv_-)-H(\psi)}{t}= \begin{pmatrix}
        0 \\ 1
    \end{pmatrix}+\lim_{t \rightarrow 0} \sum_{k=1}^{\infty}  \varepsilon^k\begin{pmatrix}
 \frac{\left[h^{(k)}_+(\psi+tv_-)- h^{(k)}_+(\psi)\right ]}{t}\\ 
\frac{\left[h^{(k)}_-(\psi+tv_-)-h^{(k)}_-(\psi) \right ]}{t}
\end{pmatrix},
\end{equation}
where the subscripts $\pm$ of any function represents the component of that function in the direction $v_{\pm}$; $\partial_\pm$ is directional derivative operator in the direction of $v_\pm,$  and $h^{(k)}$  is the $k-$th order term in the expansion $H_{\varepsilon} (\psi)=\psi+\varepsilon h^{(1)}(\psi)+\varepsilon^2 h^{(2)}(\psi)+\varepsilon^3 h^{(3)}(\psi)+...$ Unfortunately, There is no naive way to compute the limit (\ref{long limit formula}). For example, the formula for $h_-^{(1)}$ is
\[h_-^{(1)}(\psi)=\sum_{p=-1}^{-\infty}-\lambda_-^{|p+1|} f_-(S^p\psi).\]
Differentiating in the $v_-$ direction, assuming exchanging limit and summation is possible, we get
$$\partial_{-} h_{-}^{(1)}(\psi)\text{=}\sum_{p=-1}^{-\infty} -\lambda_{-}^{|p + 1|} \frac{d}{dt} f_-(S^{p} (\psi + tv_-)) \bigg|_{t=0}=\sum_{p=-1}^{-\infty} -\lambda_{-}^{|p + 1|} (\partial_- f_-)(S^{p}\psi) \lambda_{-}^p,$$ 
and after combining the two $\lambda_{-}$ factors, we lose t
he converging factor and we are left with a sum that may not converge (unless special cancellation occurs)! Here is an alternative solution:
\begin{theorem}
\label{main theorem}
For $\psi \in \mathbb{T}^2, t>0$, define $$V_{\epsilon}(\psi, t) := \frac{H(\psi + tv_-) - H(\psi)}{t}.$$
Let $V_\alpha(\psi, t) := V_{\epsilon}(\psi,t) \cdot v_\alpha$.
Then there exist $D>0$ such that for $|\epsilon|<D$, the limit 
\begin{equation}
\label{main equation}
   v_{\epsilon}(\psi) := \lim_{t \to 0} \frac{V_{+}(\psi, t)}{V_{-}(\psi, t)} 
\end{equation}
exists.
\end{theorem}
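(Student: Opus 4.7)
The plan is to show that the limit $v_\varepsilon(\psi)$ exists and equals the slope, in the $(v_+,v_-)$-basis, of the stable direction $E^s_\varepsilon(H_\varepsilon(\psi))$ of $S_\varepsilon$ at the image point. Rather than try to analyze the series \eqref{long limit formula} term by term, I would treat the image curve $t\mapsto H_\varepsilon(\psi+tv_-)$ as a single global object, namely a parametrization of the stable manifold of $S_\varepsilon$ through $H_\varepsilon(\psi)$.

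First, by Proposition \ref{conjugation theorem}(iii), the point $H_\varepsilon(\psi+tv_-)$ lies in $W^s_\varepsilon(H_\varepsilon(\psi))$ for every $t$. Since $f$ is a trigonometric polynomial, $S_\varepsilon$ is real analytic, and Theorem 1 applied to $S_\varepsilon$ gives that $W^s_\varepsilon(H_\varepsilon(\psi))$ is a $C^\infty$ one-dimensional embedded submanifold of $\mathbb{T}^2$ whose tangent at $H_\varepsilon(\psi)$ equals $E^s_\varepsilon(H_\varepsilon(\psi))$. I would therefore pick a smooth local parametrization $\gamma:(-\delta,\delta)\to W^s_\varepsilon(H_\varepsilon(\psi))$ with $\gamma(0)=H_\varepsilon(\psi)$ and $\gamma'(0)$ a non-zero vector spanning $E^s_\varepsilon(H_\varepsilon(\psi))$. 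Because $H_\varepsilon$ is continuous and $\gamma$ is a smooth embedding, for $|t|$ sufficiently small there is a unique $u(t)\in(-\delta,\delta)$ with $H_\varepsilon(\psi+tv_-)=\gamma(u(t))$; by injectivity of the lift of the homeomorphism $H_\varepsilon$ to $\mathbb{R}^2$, $u(t)\neq 0$ for $t\neq 0$, while $u(t)\to 0$ as $t\to 0$.

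Next, using only $\gamma\in C^1$, I would expand $\gamma(u)-\gamma(0)=u\,\gamma'(0)+o(u)$ as $u\to 0$ and substitute $u=u(t)$ to obtain
\[
\frac{V_+(\psi,t)}{V_-(\psi,t)}=\frac{u(t)\bigl[\gamma'(0)\cdot v_+\bigr]+o(u(t))}{u(t)\bigl[\gamma'(0)\cdot v_-\bigr]+o(u(t))}\;\xrightarrow[t\to 0]{}\;\frac{\gamma'(0)\cdot v_+}{\gamma'(0)\cdot v_-},
\]
provided the denominator is non-zero. By construction this limit is the slope of the one-dimensional subspace $E^s_\varepsilon(H_\varepsilon(\psi))$ in the basis $(v_+,v_-)$, and is manifestly independent of the chosen parametrization. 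The whole argument sidesteps the divergent term-by-term derivative displayed in \eqref{long limit formula} by working with $W^s_\varepsilon$ as a single smooth curve rather than with the series for $H_\varepsilon$.

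The main obstacle is ensuring $\gamma'(0)\cdot v_-\neq 0$, since this non-degeneracy is exactly what makes the limit in \eqref{main equation} finite. The constant $D>0$ must therefore be taken small enough that (a) Proposition \ref{conjugation theorem} applies, giving $H_\varepsilon$ and keeping $S_\varepsilon$ Anosov, and (b) the stable bundle $E^s_\varepsilon$ stays uniformly bounded away from $\mathrm{span}(v_+)$ on all of $\mathbb{T}^2$. Part (b) follows from the continuity of the Anosov splitting in the perturbation parameter, together with $E^s_0\equiv\mathrm{span}(v_-)$ and the compactness of $\mathbb{T}^2$; any $D\leq\varepsilon_0$ for which $\inf_{q\in\mathbb{T}^2}|E^s_\varepsilon(q)\cdot v_-|>0$ will do. This standard perturbation statement for Anosov splittings is the only non-routine ingredient in the argument.
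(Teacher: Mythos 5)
Your strategy is genuinely different from the paper's (which proves existence by expanding $V_\pm$ in powers of $\varepsilon$, writing $q(\varepsilon,t)$ via trees, and performing explicit cancellations to get a convergent series $\sum_k\varepsilon^k\,\mathrm{Val}[q_k(0)]$), and in outline it is a reasonable soft alternative; but as written it has a genuine gap at its central step. You claim that ``because $H_\varepsilon$ is continuous and $\gamma$ is a smooth embedding, for $|t|$ sufficiently small there is a unique $u(t)\in(-\delta,\delta)$ with $H_\varepsilon(\psi+tv_-)=\gamma(u(t))$, and $u(t)\to 0$.'' Continuity of $H_\varepsilon$ only gives that $H_\varepsilon(\psi+tv_-)$ is close to $H_\varepsilon(\psi)$ in the ambient metric of $\mathbb{T}^2$. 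The leaf $W^s_\varepsilon(H_\varepsilon(\psi))$ is an injectively immersed, dense curve (the paper itself points out this density), so ambient proximity to $\gamma(0)$ does not place the point on the local arc $\gamma((-\delta,\delta))$: a priori it could sit on a far-away portion of the same leaf that returns close to $H_\varepsilon(\psi)$. To close this you need a hyperbolicity argument, e.g.: for all $n\ge 0$, $d\bigl(S_\varepsilon^nH_\varepsilon(\psi+tv_-),S_\varepsilon^nH_\varepsilon(\psi)\bigr)=d\bigl(H_\varepsilon(S^n\psi+\lambda_-^ntv_-),H_\varepsilon(S^n\psi)\bigr)\le C|t|^\beta$ by the H\"older bound of Proposition \ref{conjugation theorem}, and then invoke the standard fact (local product structure/expansivity for Anosov maps) that a point whose entire forward orbit stays uniformly close to that of $x$ lies on the \emph{local} stable manifold of $x$, with intrinsic leaf distance controlled by the ambient distance; only then does $u(t)\to 0$ follow. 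This ingredient is standard but nontrivial, it is not in the paper's toolkit, and without it the limit computation collapses. Similarly, the nondegeneracy $\gamma'(0)\cdot v_-\neq 0$ rests on uniform continuity of the splitting $\varepsilon\mapsto E^s_\varepsilon$, another external (though standard) perturbation result that you should cite or prove rather than assert.

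It is also worth noting what the two routes buy. Even once patched, your argument yields only the existence of the limit (\ref{main equation}) and its identification with the slope of $E^s_\varepsilon(H_\varepsilon(\psi))$; it gives no information about the dependence on $\varepsilon$. The paper's formal-series/derivative-tree proof is much heavier, but it is constructive: it produces the explicit expansion $v_\varepsilon(\psi)=\sum_{k}\varepsilon^k\,\mathrm{Val}[q_k(0)]$, shows it converges for $|\varepsilon|<D$, and thereby gives analyticity in $\varepsilon$ and a computable formula for the tangent field, which is the actual goal of the paper beyond the bare existence statement.
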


\begin{remark}
\label{proof steps}
    Theorem \ref{main theorem} shows that, as $t \rightarrow 0,$ there exist a "limiting direction" of the quotient $V_{\varepsilon},$ i.e., the vector $(v_{\epsilon}(\psi),1),$ which gives us a good candidate for the tangent space of $W^s_{\varepsilon}.$ The outline of the proof is now as follows. We shall establish that\\
\indent (i) There exist $t^*>0$ and $D>0$ such that there is a unique functions $q(\varepsilon,t)$ satisfying
\begin{equation}
\label{formal cauchy product}
    V_+(\psi,t)=V_-(\psi,t)q(\varepsilon,t)
\end{equation}
such that if $t\in (0, t^*),$ then $q(\varepsilon,t)$ is analytic in $\varepsilon$ for $|\varepsilon|<D;$\\
\indent (ii) Let $q^{(k)}(t)$ be the $k-$th order term of $q(\varepsilon,t)$. Then $\text{Val}[q^{(k)}(0)]:=\lim_{t \rightarrow 0} q^{(k)}(t)$ exist; \\
\indent (iii) Show that $v_{\varepsilon}(\psi):=\sum_{k=0}^{\infty} \varepsilon^k \text{Val}[ q^{(k)}(0)]$ converges absolutely for $|\varepsilon|<D$, and gives the limit to (\ref{main equation}).
\end{remark}
First we find the representation of $q(\varepsilon,t)$ which we will be working with. Treat (\ref{formal cauchy product}) first as a formal Cauchy product of power series in $\varepsilon$. Let $V^{(k)}_{\alpha}(\psi,t)$ be the $k$-th order term of $V_{\alpha}(\psi,t)$. Then (\ref{formal cauchy product}) reads
$$\left(\sum_{k = 1}^{\infty} \epsilon^k V_{+}^{(k)} \right)=\left (\sum_{k = 1}^{\infty} \epsilon^k q_k \right) \left(\sum_{k = 0}^{\infty} \epsilon^k V_{-}^{(k)} \right)  $$
As a formal power series, we have, by lemma \ref{explicit formula for power series} from the Appendix, 
\begin{equation}
\label{q_n(t)}
    q_n(t) = V_{+}^{(n)} + \sum_{k = 1}^{n - 1}V_{+}^{(k)} \left(\sum_{s = 0}^{n - k}(-1)^s \sum_{\substack{m_1 + ... + m_s = n - k \\ m_1,...,m_s \geq 0}} V_-^{(m_1)} V_-^{(m_2)} ... V_-^{(m_s)} \right)
\end{equation}
\section{Motivation For the Stragegy}
We now will begin by addressing item (ii) of Remark \ref{proof steps}. To motivate how this will be done, let us consider the equation for $q_2$:
$$q_2(t) = V_{+}^{(2)} - V_{+}^{(1)}V_{-}^{(1)} = \frac{h_+^{(2)}(\psi+tv_-)-h_+^{(2)}(\psi)}{t}- \frac{h_+^{(1)}(\psi+tv_-)-h_+^{(1)}(\psi)}{t}\frac{h_-^{(1)}(\psi+tv_-)-h_-^{(1)}(\psi)}{t}$$
If we take the limit as $t$ goes to $0$, the three quotient above looks a lot like the "directional directives" of the $h$'s (although they do not converge in general, as mentioned earlier). Thus, we seek to define objects $\partial_- h_{+}^{(2)}, \partial_- h_{+}^{(1)}$ and $\partial_- h_{-}^{(1)}$, and operations $+,-,\cdot $ between them such that
$$\lim_{t \to 0} q_2(t) = \partial_- h_{+}^{(2)} - (\partial_- h_{+}^{(1)})\cdot ( \partial_- h_{-}^{(1)}).$$
The obvious candidate is to define this "derivative" as the formal series which involves taking the formal derivative.\\
\indent For example, if we let $\mathbb{Z}_{+}$ be the set of non-negative integers, and $\mathbb{Z}_{-}$ be the set of negative integers, by results from the next section,
$$h_+^{(2)}(\psi)=\sum_{a\in \mathbb{Z}_+}\sum_{b\in \mathbb{Z}_+} \lambda_+^{-|a+1|} \lambda_+^{-|b+1|} \partial_+ f_+(S_0^{a} \psi) f_+(S_0^{a+b} \psi)+\sum_{a\in \mathbb{Z}_+}\sum_{b\in \mathbb{Z}_-} -\lambda_+^{-|a+1|} \lambda_-^{|b+1|} \partial_- f_+(S_0^{a} \psi) f_-(S_0^{a+b} \psi),$$
we define $\partial_-h^{(2)}_+$ to be the following sum of (formal) double indexed series
\begin{align}
\label{del h^2 +}
    \partial_-h^{(2)}_+ := &\sum_{a\in \mathbb{Z}_+}\sum_{b\in \mathbb{Z}_+} \lambda_+^{-|a+1|} \lambda_+^{-|b+1|} \lambda_-^{a} (\partial_-\partial_+) f_+(S_0^{a} \psi) f_+(S_0^{a+b} \psi) + \notag \\
    & \sum_{a\in \mathbb{Z}_+}\sum_{b\in \mathbb{Z}_+} \lambda_+^{-|a+1|} \lambda_+^{-|b+1|} \lambda_-^{a+b} \partial_+ f_+(S_0^{a} \psi) \partial_-f_+(S_0^{a+b} \psi) + \notag \\
    & \sum_{a\in \mathbb{Z}_+}\sum_{b\in \mathbb{Z}_-} -\lambda_+^{-|a+1|} \lambda_-^{|b+1|} \lambda_-^a \partial_-^2 f_+(S_0^{a} \psi) f_-(S_0^{a+b} \psi) + \notag \\
    & \sum_{a\in \mathbb{Z}_+}\sum_{b\in \mathbb{Z}_-} -\lambda_+^{-|a+1|} \lambda_-^{|b+1|} \lambda_-^{a+b} \partial_- f_+(S_0^{a} \psi) \partial_-f_-(S_0^{a+b} \psi).  
\end{align}
That is, we differentiate within the sum and separate the two sums created by the product rule. Furthermore, the product $(\partial_- h_{+}^{(1)})\cdot ( \partial_- h_{-}^{(1)})$ is defined as the formal series
\begin{align}
\label{del h+ * del h-}
    (\partial_- h_{+}^{(1)})\cdot ( \partial_- h_{-}^{(1)}) & = \left( \sum_{a\in \mathbb{Z}_+}\lambda_+^{-|a+1|} \lambda_-^{a}\partial_-f(S^a_0 \psi)\right) \cdot \left( \sum_{b\in \mathbb{Z}_-}-\lambda_-^{|b+1|} \lambda_-^{b}\partial_-f(S^b_0 \psi)\right) \notag\\
     & := \sum_{a\in \mathbb{Z}_+}\sum_{b\in \mathbb{Z}_-} -\lambda_+^{-|a+1|} \lambda_-^{|b+1|} \lambda_-^{a+b} \partial_- f_+(S_0^{a} \psi) \partial_-f_-(S_0^{b} \psi) \notag\\
     & = \sum_{a\in \mathbb{Z}_+}\sum_{b\in \mathbb{Z}_-} -\lambda_+^{-|a+1|} \lambda_-^{a-1} \partial_- f_+(S_0^{a} \psi) \partial_-f_-(S_0^{b} \psi).
\end{align}
Note that all but the last formal series in (\ref{del h^2 +}) converges absolutely in the ordinary sense. This last series could be break into two parts:
\begin{align}
\label{break into two parts}
    &\sum_{a\in \mathbb{Z}_+}\sum_{b=0}^{-a} -\lambda_+^{-|a+1|} \lambda_-^{|b+1|} \lambda_-^{a+b} \partial_- f_+(S_0^{a} \psi) \partial_-f_-(S_0^{a+b} \psi)+ \notag\\
    &\sum_{a\in \mathbb{Z}_+}\sum_{b=-a-1}^{\infty} -\lambda_+^{-|a+1|} \lambda_-^{|b+1|} \lambda_-^{a+b} \partial_- f_+(S_0^{a} \psi) \partial_-f_-(S_0^{a+b} \psi),
\end{align}
where the first part converges absolutely in the ordinary sense, and the second part, after combing the exponents of the $\lambda$'s and re-indexing,
\begin{align}
   &\sum_{a\in \mathbb{Z}_+}\sum_{b=-a-1}^{\infty} -\lambda_+^{-|a+1|} \lambda_-^{|b+1|} \lambda_-^{a+b} \partial_- f_+(S_0^{a} \psi) \partial_-f_-(S_0^{a+b} \psi) \notag\\
   =& \sum_{a\in \mathbb{Z}_+}\sum_{b=-a-1}^{\infty} -\lambda_+^{-|a+1|}\lambda_-^{a-1} \partial_- f_+(S_0^{a} \psi) \partial_-f_-(S_0^{a+b} \psi) \notag \\
   =& \sum_{a\in \mathbb{Z}_+}\sum_{b\in \mathbb{Z}_-} -\lambda_+^{-|a+1|} \lambda_-^{a-1} \partial_- f_+(S_0^{a} \psi) \partial_-f_-(S_0^{b} \psi).
\end{align}
This is exactly the same series as $(\ref{del h+ * del h-})$! Thus we expect them to "cancel" and will obtain a good candidate for the value of $\partial_- h_{+}^{(2)} - (\partial_- h_{+}^{(1)})\cdot ( \partial_- h_{-}^{(1)}):$ just let it equal to the sum the first three series in (\ref{del h^2 +}) subtracted by the first series in (\ref{break into two parts}) (the actual value that these series converges to). If we could do this at every order, then the last step will be to show that this value agree with the limit we are interested. 

\section{Formula For the conjugation}
\indent Now we present the explicit formula for $H_{\varepsilon}$. For details about the construction, see \cite{LinearOperators}. We first need some terminologies.
\begin{definition}
    Let $n>0$ be a positive integer. A $\textbf{tree}$ is an ordered $k$-tuple $(\vartheta_1,...,\vartheta_k)$ of trees, where $k \geq 0$. We adapt the convention that the only ordered 0-tuple of tree is the pair of parenthesis "$()$" without any content. A $\textbf{node}$ of a tree $\vartheta=(\vartheta_1,...,\vartheta_k)$ is either the tree itself, or a node of one of its elements $\vartheta_i (1 \leq i \leq k)$ if $k \geq 1$. We will call the node represented by the tuple itself of a tree its $\textbf{top node}$. The set of all nodes of a tree $\vartheta$ is denoted as $V(\vartheta)$. The $\textbf{parent}$ of a node $v \in V(\vartheta)$ is the unique node $w$ such that $v$ is an element of the tuple $w$. The top node does not have a parent. If $a,b \in V(\vartheta)$, the partial order $a \succeq b$ represents either $a=b$ or there exist a finite sequence $a_0, a_1,...,a_s \in V(\vartheta)$ satisfying $a_0=a, a_s=b$ with the property that $a_i$ is the parent of $a_{i+1}$ for all $0\leq i \leq s-1$. For $v\in V(v),$ let $s_v$ be the number of immediate child of $v$, i.e., the size of the tuple $v$. Finally, A $\textbf{linear tree}$ is a tree such that, for every node $v$, $s_v \leq 1$.
\end{definition}

Note that we deliberately avoided defining trees in terms of an undirected graph in which any two vertices are connected by exactly one path, in order to emphasize the fact that $((), (()))$ and $((()),())$, for example, are considered two different trees. Of course, there is an obvious correspondence between our definition of tree and the  "vertices and edges" definition, and in practice we will (morally) use this more intuitive image to think of a tree. \\
\indent For example, there are 5 different trees with 4 vertices:
\[(((()))) \hspace{1cm} ((), (())) \hspace{1cm} ((()), ()) \hspace{1cm} ((),(),()) \hspace{1cm} (((), ()))\]
which intuitively corresponds to the following graphical representation:

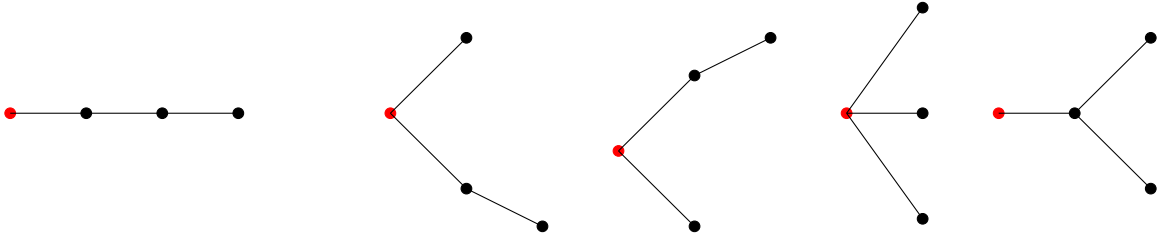
\begin{figure}[h]
    \centering
    \begin{tikzpicture}
    \filldraw[red] (0,0) circle (2pt);
    \filldraw (1,0) circle (2pt);
    \filldraw (2,0) circle (2pt);
    \filldraw (3,0) circle (2pt);
    \draw (0,0) -- (3,0);

    \filldraw[red] (5,0) circle (2pt);
    \filldraw (6,1) circle (2pt);
    \filldraw (6,-1) circle (2pt);
    \filldraw (7,-1.5) circle (2pt);
    \draw (5,0) -- (6,1);
    \draw (5,0) -- (6,-1) -- (7,-1.5);

    \filldraw[red] (8,-0.5) circle (2pt);
    \filldraw (9,0.5) circle (2pt);
    \filldraw (10,1) circle (2pt);
    \filldraw (9,-1.5) circle (2pt);
    \draw (8,-0.5) -- (9,0.5) -- (10,1);
    \draw (8,-0.5) -- (9,-1.5);

    \filldraw[red] (11,0) circle (2pt);
    \filldraw (12,0) circle (2pt);
    \filldraw (12,-1.4) circle (2pt);
    \filldraw (12,1.4) circle (2pt);
    \draw (11,0) -- (12,1.4);
    \draw (11,0) -- (12,0);
    \draw (11,0) -- (12,-1.4);

    \filldraw[red] (13,0) circle (2pt);
    \filldraw (14,0) circle (2pt);
    \filldraw (15,1) circle (2pt);
    \filldraw (15,-1) circle (2pt);
    \draw (13,0) -- (14,0);
    \draw (14,0) -- (15,1);
    \draw (14, 0) -- (15, -1);
\end{tikzpicture}
    \caption{Graphical representation of trees with 4 vertices, with top node colored red.}
    \label{fig:trees}
\end{figure}

\begin{definition} A $\textbf{labeled tree}$ $\vartheta_l$ is a tree such that on each node $v \in V(\vartheta_l)$, there is a label $\alpha_v \in \{+,-\}$, a label $p_v \in \mathbb{Z}_{a_v}$. Two labeled tree are equal iff they are equal as trees, and the corresponding labels are equal. Let $\Theta_{k,\alpha}$ denote the set of all labeled trees with $k$ nodes such that the sign label on the parent node is $\alpha$. For each $v\in V(\vartheta_l),$ we let $q(v)=\sum_{w\succeq v} p_w.$ Additionally, for a fixed $\psi \in \mathbb{T}^2$, let us define the value of each node, $\text{Val}(v,\psi)$, as 
$$\text{Val}(v,\psi) := \frac{\alpha_v}{s_v!} \lambda_{\alpha_v}^{-|p_v+1|\alpha_v} \left[\left( \prod_{j=1}^{s_v} \partial_{\alpha_{v_j}}\right)f_{\alpha_v} \right](S_0^{q(v)} \psi).$$
We call the function $F_v(\psi):=\left[\left( \prod_{j=1}^{s_v} \partial_{\alpha_{v_j}}\right)f_{\alpha_v} \right](S_0^{q(v)} \psi)$, the constant $C_v:=\alpha_v/s_v!$, the exponent $\Lambda_v:=\lambda_{a_v}^{-|p_v+1|a_v}$, and the function $g_v:=\left( \prod_{j=1}^{s_v} \partial_{\alpha_{v_j}}\right)f_{\alpha_v}$ the \textbf{F-contribution}, the \textbf{C-contribution}, the $\Lambda$-\textbf{contribution}, and the \textbf{g-contribution}, respectively, of the node $v \in V(\vartheta).$
For each labeled tree $\vartheta_l$ and a fixed $\psi \in \mathbb{T}^2,$ set
\begin{equation}
\label{tree value}
\text{Val}(\vartheta_l,\psi)=\prod_{v\in V(\vartheta_l)}\text{Val}(v,\psi).
\end{equation}
Similarly, we call the \textbf{F}/\textbf{C}/$\Lambda$/\textbf{g} \textbf{contribution of a labeled tree} to be the product of the $F/C/\Lambda/g$ contributions of all the nodes on the labeled tree. 
\end{definition}
For $\alpha \in \{+,-\}$, it was found \cite{LinearOperators} that 
\begin{equation}
\label{h^(k)}
    h_{\alpha}^{(k)}(\psi) = \sum_{\vartheta_l \in \Theta_{k, \alpha}} \text{Val}(\vartheta_l,\psi).
\end{equation}

If the point $\psi \in \mathbb{T}^2$ is fixed and clear from the context, we shall abbreviate $\text{Val}(v,\psi)$ and $\text{Val}(\vartheta_l,\psi)$ as $\text{Val}(v)$ and $\text{Val}(\vartheta)$, respectively.

\section{Method of Formal Series, Derivative Trees, and Product Trees}
\indent Now we begin to introduce the make precise of the main methodology of the paper. For a labeled tree $\vartheta$ and $v\in V(\vartheta)$, we denote $\partial_- \text{Val}(v,\psi)$ the directional derivative in the direction of $v_-$ of the function $\psi \mapsto \text{Val}(v,\psi).$ Motivated by the procedure in section 2 and 3, since  $h_{\alpha}^{(i)}(\psi) = \sum_{\vartheta \in \Theta_{i, \alpha}} \prod_{v \in V(\vartheta)} \text{Val}(v,\psi)$, we can define $\partial_- h_{\alpha}^{(k)}(\psi)$ as the following formal series:
\begin{equation}
\label{del h defination}
\partial_- h_{\alpha}^{(i)}(\psi) := \sum_{\vartheta \in \Theta_{i, \alpha}} \partial_- \left( \prod_{v \in V(\vartheta)}  \text{Val}(v,\psi) \right) = \sum_{\vartheta \in \Theta_{i, \alpha}} \left( \sum_{v \in V(\vartheta)} \partial_- \text{Val}(v,\psi) \prod_{v \in \vartheta, v \neq v_k} \text{Val}(v,\psi) \right),
\end{equation}
 The sum inside can be thought of as a tree with the label $\partial_-$ on the node $v_k$ :

\begin{definition} Let $\vartheta_l$ be a labeled tree. A $\textbf{derivative tree}$ is a tree that admits all the labels of $\vartheta_l$ along with an extra label "$\partial_-$" on an arbitrary node $v \in V(\vartheta_l).$ We call $\vartheta_l$ the $\textbf{skeleton}$ for such derivative tree and $v$ the \textbf{derivative node}. We may use the notation $\partial_v \vartheta_l$ for any derivative tree with skeleton $\theta_l$ whose derivative node is $v$, or simply $\vartheta$ if the node (or skeleton) is not specified. For $\psi \in \mathbb{T}^2$, the $\textbf{value}$ of a derivative tree $\partial_v \vartheta_l$ is defined as
$$\text{Val}(\partial_v \vartheta_l, \psi) := (\partial_- \text{Val}(v,\psi)) \prod_{v \in \vartheta, v \neq v_j} \text{Val}(v,\psi).$$
Two derivative trees are equal iff they are equal as labeled trees, and the derivative label are from the same node. Denote $\partial (\Theta_{k,\alpha})$ all the derivative trees with skeleton in $\Theta_{k,\alpha}$. We could again abbreviate $\text{Val}(\partial_v \vartheta_l, \psi)$ as $\text{Val}(\partial_v \vartheta_l)$ if the point $\psi$ is clear from the context. \\
\indent Note that if $\vartheta$ is a derivative tree, then it is also a ordinary tree, so the F/C/$\Lambda$/g contribution of $\vartheta$ or a node on $\vartheta$ is automatically defined. The \textbf{derivative}-\textbf{F}/\textbf{C}/$\Lambda$/\textbf{g} \textbf{contribution} of a node $v \in V(\vartheta)$, denoted as $D F_v,D C_v,D \Lambda_v,D g_v$, respectively, is defined as follows (let $F_v,C_v,\Lambda_v,g_v$ be the corresponding F/C/$\Lambda$/g contribution of $v$): \\
\indent (i) if $v$ is not the derivative node, then $D F_v=F_v;D C_v=C_v;D \Lambda_v=\Lambda_v;D g_v=g_v$;\\
\indent (ii) if $v$ is the derivative node, then $D F_v(\psi)=\partial_-F_v(\psi)/\lambda_-^{q(v)};DC_v=C_v;D \Lambda_v = \Lambda_v \cdot \lambda_-^{q(v)}$, and $Dg_v(\psi) = \partial_- g_v(\psi)$. \\
The \textbf{derivative}-\textbf{F}/\textbf{C}/$\Lambda$/\textbf{g} \textbf{contribution} of $\vartheta$ is the product of all the corresponding derivative contributions from each node.
\end{definition} 
\begin{figure}[h]
    \centering
    \begin{tikzpicture}
    \tikzstyle{vertex} = [circle, fill=black!255, minimum size = 1.5pt, inner sep = 1.5pt]
    \node[vertex] (v_1) at (-1, -1) {};
    \node[vertex] (v_2) at (0, -1) {};
    \node[vertex] (v_3) at (1, -1) {};
    \node[vertex] (v_4) at (2, -1) {};
    \draw (v_1) -- (v_2) -- (v_3) -- (v_4);
    \node[yshift = 10 pt] at (v_2) {$\partial_-$};
    \node[yshift = -10 pt] at (v_2) {$v$};
    \node[yshift = -10 pt] at (v_1) {$(+,2)$};
    \node[yshift = -20 pt] at (v_2) {$(-,17)$};
    \node[yshift = -10 pt] at (v_3) {$(+,0)$};
    \node[yshift = -10 pt] at (v_4) {$(-,1)$};
    \end{tikzpicture}
    \caption{A derivative tree with four vertices, with a derivative label on the node $v$}
    \label{linear derivative tree}
\end{figure}
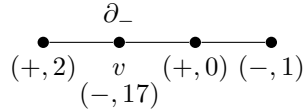 Now (\ref{del h defination}) can be re-written as 
\begin{equation}
    \partial_-h_{\alpha}^{(i)}= \sum_{\vartheta \in \partial(\Theta_{k, \alpha})} \text{Val}(\vartheta).
\end{equation}
The next goal is to find a more organized way for representing product of formal series. Note that
\begin{equation}
(\partial_- h_{\alpha_1}^{(i)})\cdot (\partial_- h_{\alpha_2}^{(j)}) =  \sum_{\vartheta_1' \in \partial(\Theta_{i, \alpha_1}), \vartheta_2' \in \partial( \Theta_{i, \alpha_2})} \text{Val}(\vartheta_1') \times \text{Val}(\vartheta_1'),
\end{equation}
which is a formal sum over the values of all the "product of trees":

\begin{definition} A $\textbf{product tree}$ $p$ is an element in Cartesian products of the form $\prod_ {i=0}^{s}\partial (\Theta_{k_i,\alpha_i})$, where $k \geq 0$. We use $s(p), n(p)$ and $p(i)$ to represent the number of factors $-$ 1, the total number of nodes across all factors, and the derivative tree with index $i$ in $p$.($i$ always begin with 0).  If $s(p)=0,$ we sometimes make the obvious identification with the derivative tree it represents. The value of $p$ is defined as
$$\text{Val}(p) := \prod_{i=0}^{s(p)} \text{Val}(p(i)).$$
Therefore, we obtain a more compact notation
\begin{equation}
    (\partial_- h_{\alpha_0}^{(k_0)})\cdots(\partial_- h_{\alpha_s}^{(k_s)})= \sum_{p \in \prod_ {i=0}^{s}\partial (\Theta_{k_i,\alpha_i})} \text{Val}(p)
\end{equation}
for product of formal series. A product tree whose components are $\vartheta_1,...,\vartheta_k$ should be denoted as $\vartheta_1 \times \cdots \times \vartheta_k$ instead of $(\vartheta_1 ,\cdots , \vartheta_k)$ to avoid confusion with a single tree. We use $V(p)$ to denote the set of all nodes across all factors of $p$. Two product trees are equal iff they have the same number of components and the derivative trees on each components are equal. \\
\indent If $p$ is a product tree, the \textbf{(derivative-)}\textbf{F}/\textbf{C}/$\Lambda$/\textbf{g} \textbf{contribution} of a node $v \in V(\vartheta)$ is the corresponding (derivative-) contribution of the node from the corresponding tree factor. The \textbf{(derivative-)}\textbf{F}/\textbf{C}/$\Lambda$/\textbf{g} \textbf{contribution} of $\vartheta$ the product of all the (derivative-) contribution from each factor tree.

\end{definition}
Now, motivated by equation (\ref{q_n(t)}) and the procedure for the second order, we want to show that after "cancellation", the formal series $q_n(0)$ defined as
\begin{equation}
\label{q_n(0)}
     q_n(0) := \partial_-h_{+}^{(n)} + \sum_{k = 1}^{n - 1}\partial_-h_{+}^{(k)} \left(\sum_{s = 0}^{n - k}(-1)^s \sum_{\substack{m_1 + ... + m_s = n - k \\ m_1,...,m_s \geq 0}} \partial_-h_-^{(m_1)} \cdot ... \cdot \partial_-h_-^{(m_s)} \right)
 \end{equation}
converges. The following lemma is helpful in understanding the structure of $(\ref{q_n(0)})$:

\begin{lemma} 
\label{alternate form of q_n(0) Lemma}
Let $\mathcal{P}_{k, +}$ be the collection of all product trees $p$ such that $n(p)=k$ and the label on the top node of $p(i)$ is $+$ if $i=0$, and $-$ otherwise. For all $n \geq 1,$ we have, as formal series without cancellations,
\begin{equation}
\label{tree representation for q_0}
 q_n(0)=\sum_{p \in \mathcal{P}_{n,+}} (-1)^{s(p)}\text{Val}(p).   
\end{equation}

\end{lemma}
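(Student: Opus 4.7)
The proof should be essentially a bookkeeping exercise rather than an analytic argument. My plan is to substitute the tree expansion
\[
\partial_- h_\alpha^{(m)} = \sum_{\vartheta \in \partial(\Theta_{m,\alpha})} \text{Val}(\vartheta),
\]
which follows by combining (\ref{h^(k)}) with the definition (\ref{del h defination}), into every factor on the right-hand side of (\ref{q_n(0)}), distribute the products over the sums, and then recognize each resulting term as the value of a unique product tree in $\mathcal{P}_{n,+}$. Because the lemma is stated as an equality of formal series without cancellation, the identification must be carried out term-by-term; no rearrangement or combining of like terms is allowed.

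Carrying this out, the summand of (\ref{q_n(0)}) at fixed $k$ (treating the separated leading term as the $k = n$ case) and fixed $s$ becomes, after substitution,
\[
(-1)^s \sum_{m_1+\cdots+m_s=n-k} \sum_{\vartheta_0 \in \partial(\Theta_{k,+})} \sum_{\vartheta_1 \in \partial(\Theta_{m_1,-})} \cdots \sum_{\vartheta_s \in \partial(\Theta_{m_s,-})} \text{Val}(\vartheta_0) \text{Val}(\vartheta_1) \cdots \text{Val}(\vartheta_s).
\]
By the defining relation $\text{Val}(p) = \prod_{i=0}^{s(p)} \text{Val}(p(i))$, the innermost product equals $\text{Val}(p)$ for the product tree $p = \vartheta_0 \times \vartheta_1 \times \cdots \times \vartheta_s$. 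The constraints baked into this summation — the first factor has top-node label $+$, every other factor has top-node label $-$, and $n(p) = k + m_1 + \cdots + m_s = n$ — are precisely the definition of $\mathcal{P}_{n,+}$, and one reads off $s(p) = s$, so the external sign $(-1)^s$ matches $(-1)^{s(p)}$. Conversely, any $p \in \mathcal{P}_{n,+}$ is uniquely determined by the data $(s, k, m_1, \ldots, m_s, \vartheta_0, \ldots, \vartheta_s)$ via $s = s(p)$, $k = n(p(0))$, and $m_j = n(p(j))$, so the reorganization is a bijection and the claimed equality follows.

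The only point needing a brief comment is the indexing convention: the natural indexing of $\mathcal{P}_{n,+}$ has $k \ge 1$ and $m_j \ge 1$, whereas (\ref{q_n(0)}) is written with $m_j \ge 0$. Since a labeled tree must have at least one node to carry its top-node sign label, $\Theta_{0,-}$ is empty, hence $\partial_- h_-^{(0)} = \sum_{\vartheta \in \partial(\Theta_{0,-})} \text{Val}(\vartheta) = 0$ as a formal series, and any summand of (\ref{q_n(0)}) with some $m_j = 0$ vanishes. Thus the two conventions agree in the formal setting and the bijection above captures every nonzero term exactly once. I do not foresee any conceptual obstacle; the main risk is clerical (miscounting the factor indices or mislabeling the signs), so the proof should be written with careful explicit bookkeeping of the correspondence between tuples and product trees.
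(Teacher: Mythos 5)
Your proposal is correct and follows essentially the same route as the paper: rewrite (\ref{q_n(0)}) as a sum over compositions of $n$, substitute the tree expansion of each $\partial_- h_\alpha^{(m)}$, distribute, and identify each resulting term with a unique product tree in $\mathcal{P}_{n,+}$ carrying the sign $(-1)^{s(p)}$. Your explicit remark that summands with some $m_j = 0$ vanish because $\partial(\Theta_{0,-})$ is empty is a detail the paper leaves implicit, but it does not change the argument.
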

\begin{proof}
    By (\ref{q_n(0)}),
    \begin{align}
      q_n(0) & =\sum_{\substack{m_1+...+m_k=n \\ m_1,...,m_s \geq 0}} (-1)^{k-1} (\partial_- h_+^{(m_1)})( \partial_- h_-^{(m_2)})\cdots (\partial_- h_-^{(m_s)})  \notag \\
      & = \sum_{\substack{m_1+...+m_k=n \\ m_1,...,m_s \geq 0}}  (-1)^{k-1} \sum_{\substack{p \in \prod_ {i=0}^{s}\partial (\Theta_{m_i,\alpha_i}) \\ \alpha_0=+;\alpha_i=- \forall i \geq 1}} \text{Val}(p) \notag \\
      & = \sum_{\substack{m_1+...+m_k=n \\ m_1,...,m_s \geq 0}}  \sum_{\substack{p \in \prod_ {i=0}^{s}\partial (\Theta_{m_i,\alpha_i}) \\ \alpha_0=+;\alpha_i=- \forall i \geq 1}} (-1)^{s(p)} \text{Val}(p) \notag \\
      & = \sum_{p \in \mathcal{P}_{n,+}} (-1)^{s(p)}\text{Val}(p). \notag
    \end{align}
\end{proof}

\indent At this point we should be more careful with canceling formal series, as in general it is not a well defined procedure. For a simplest example, consider the two formal series $1+2+3+...$ and $2+3+4+...$. How could we define their difference? Some obvious candidates include just the number $1$, the formal series $(-1)+(-1)+(-1)+...$, or the series $1+2+1+2+3+4+...$ (subtract the second series from the first one, starting at the third position), etc. Some of these are finite, and the ones that does not converge differs dramatically in its form. \\
\indent Coming back to our case, just like what we did in (\ref{del h^2 +})-(\ref{del h+ * del h-})-(\ref{break into two parts}), we will treat all the sums in the series to be formal, with one exception that if there are two terms $C_1\cdot \Lambda_1 \cdot F_1(\psi)$ and $C_2\cdot \Lambda_2 \cdot F_2(\psi)$ satisfying $F_1(\psi)=F_2(\psi)$ as functions, then they could be combined to form $(C_1\cdot \Lambda_2+C_1\cdot \Lambda_2) \cdot F_1(\psi)$. If these coefficients sums up to 0, we will drop the term. This procedure will always give a unique series (up to rearrangements and combining terms alike), provided that for each $F(\psi),$ the number of terms of the form $C_k \cdot \Lambda_k \cdot F(\psi)$ is finite: 
\begin{proposition}
\label{series cancelation lemma}
   For each $p \in \mathcal{P}_{n,+},$ only finitely many $q \in \mathcal{P}_{n,+}$ has a derivative-F contribution that's equal to $p$'s. Furthermore, after all possible cancellations, $q_n(0)$ converges absolutely in the ordinary sense.
\end{proposition}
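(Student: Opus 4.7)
My approach splits into the two assertions of the proposition. For the finite multiplicity claim, I would unpack the derivative-F contribution of a product tree $p$ as an ordered product of factors of the form $g_v(S_0^{q(v)}\psi)$ or $(\partial_- g_v)(S_0^{q(v)}\psi)$, where each $g_v$ is one of finitely many partial derivatives of $f_\pm$. The identity of these factors together with their $q(v)$ values can be recovered (up to finite ambiguity) from the derivative-F contribution viewed as a function of $\psi$: since $f$ is a trigonometric polynomial of bounded degree and $S_0$ is hyperbolic, the Fourier support of $F_p(\psi)$ forces each $q(v)$ appearing in $p$ to lie in a finite set depending on $F_p$. Combined with the fact that there are only finitely many tree shapes with $n$ nodes and finitely many label assignments $(\alpha_v, p_v)$ once the $q(v)$'s are constrained, this yields the finite multiplicity.

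For the convergence claim, the plan is to group the sum (\ref{tree representation for q_0}) by derivative-F contribution (which by the first part is a locally finite grouping) and then exhibit a cancellation between divergent groups. The motivating computation of Section 3 identifies the source of divergence precisely: at a derivative node $v$, the derivative-$\Lambda$ contribution acquires an extra factor $\lambda_-^{q(v)}$ that fails to decay when $q(v)$ sits in the wrong sign regime. I would describe a combinatorial "cut" operation that, given a product tree with a derivative node in this divergent regime, splits the component containing that node into two pieces and redistributes the derivative labels so that each new piece is itself a derivative tree. This operation produces a product tree with $s(p)\pm 1$ components, hence opposite sign in $(-1)^{s(p)}$, and identical derivative-F contribution. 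I would then verify that the cut is an involution (or at least induces a fixed-point-free bijection on the divergent contributions of $\mathcal{P}_{n,+}$), so the divergent parts cancel after aggregation by F-contribution. The remaining summands all have derivative-$\Lambda$ factors that decay geometrically in each $|p_v|$, and the usual bound $\sum_{p\in\mathbb{Z}}|\lambda_\pm|^{-|p+1|\alpha}<\infty$ together with the finite number of tree shapes gives absolute convergence of $q_n(0)$.

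The main obstacle I anticipate is specifying the cut operation in full generality and verifying that it produces a consistent bijection. In the $n=2$ case the divergent part of a single derivative tree with derivative on its child pairs exactly with the product tree arising from $(\partial_- h_+^{(1)})\cdot(\partial_- h_-^{(1)})$, and the cut is simply "remove the edge above the derivative node." For larger $n$, however, a derivative tree can have a complicated interior structure around its derivative node; depending on the sign labels of the ancestors, the cancellation may involve multi-step splitting along a chain of ancestors of the derivative node and, inversely, gluing of more than two product-tree components. Formalizing the rule so that every divergent contribution is cancelled exactly once, without disturbing the convergent contributions, and so that the cut respects the constraint that $p(0)$ has top-node label $+$ while $p(i)$ for $i\geq 1$ has top-node label $-$, is the combinatorial heart of the argument.
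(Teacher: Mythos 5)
Your overall strategy is the right one, and for the second assertion it is in fact the paper's strategy in embryo: cut the component containing the derivative node along the ancestors of that node, use the alternation of $(-1)^{s(p)}$ with the number of components, and cancel the divergent parts. But the two places where you defer the work are exactly where the content of the proposition lies, and one of your concluding claims is false as stated. First, the cancellation is not a fixed-point-free, sign-reversing involution that kills divergent terms outright. In the paper one fixes a derivative tree $\vartheta$, forms its \emph{main stem} (the path from the top node to the derivative node), cuts at the minus-labeled stem nodes (Definition \ref{breaking class}), and shows that the sets $\text{brk}(\text{Perm}([\vartheta]))$ partition $\mathcal{P}_{n,+}$; the $\text{Perm}$ operation is needed to absorb the $1/s_v!$ symmetry factors, a point your sketch does not address. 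Within one breaking class, with $s$ minus labels on the stem, one gets $2^s$ series indexed by cuts, and the cancellation proceeds by pairing each cut with its conjugate: after a change of index in a single divergent label, the ``bad'' sub-series of one cut equals minus the whole series of its conjugate, so each pairing removes one divergent variable and leaves a finite-window remainder; iterating $s$ times reduces $2^s$ series to one. This is an identity of re-indexed sub-series, not a bijection between individual product trees, and making it precise (including the bookkeeping that the top node of $p(0)$ stays $+$ and the severed pieces get $-$ tops with their own $\mathbb{Z}_-$ labels) is the bulk of the proof you have not supplied.

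Second, your final convergence step is too optimistic: after all cancellations the surviving terms do \emph{not} have $\Lambda$-factors decaying geometrically in every label. For a minus-labeled stem node the factor $\lambda_-^{|p_v+1|}\lambda_-^{q(v)}$ has constant modulus in $p_v$; what the cancellation buys is only that $p_v$ is confined to the window $[-p_{v_1}-\cdots-p_{v_{j-1}},-1]$, whose length grows with the plus-labeled ancestors. Summing over such windows produces polynomial factors $n_1(n_1+n_2)\cdots$ against the geometric decay of the plus labels, and one needs the polylogarithm/Eulerian-number estimate of (\ref{S_c bound}) rather than the bound $\sum_p|\lambda_\pm|^{-|p+1|\alpha}<\infty$; already at order two, the surviving piece of (\ref{del h^2 +}) is constant in $b$ over a window of length $a$. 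For the finite-multiplicity assertion you take a genuinely different route (Fourier support of $F_p$) from the paper, which instead fixes the half-labeled shape (finitely many classes) and notes that the map from the integer labels to the exponents $(q(v_1),\dots,q(v_n))$ is unipotent triangular, hence injective. Your Fourier argument needs care: products of factors can have Fourier cancellations, so ``bounded support of $F_p$ forces each $q(v)$ into a finite set'' is not automatic and would have to be argued using the structure of the $g_v$'s; the paper's algebraic argument avoids this issue.
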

This will be a tedious proposition to prove, we still need more machinery.

\begin{definition}
\label{equivlant trees}
    A \textbf{half-labeled derivative tree} is a tree such that on each of its node there is a sign label, and there is a $\partial_-$ on exactly one of the nodes. (In particular, there are no integer labels). Two derivative trees $\vartheta_1$ and $\vartheta_2$ are \textbf{equivalent} if they are equal as half-labeled derivative trees. Two product trees $p_1,p_2$ are \textbf{equivalent} if $s(p_1)=s(p_2),$ and $p_1(i)$ is equivalent to $p_2(i)$ for every $0 \leq i \leq s(p_1).$
\end{definition}

Clearly this defines an equivalence class on the set of product trees, and we the notation $[p]$ to denote the equivalence class given by $p$. set $H \partial (\Theta_{k,\alpha})$ to be the set that contains all equivalence classes of the form $[\vartheta]$, where $\vartheta \in \partial (\Theta_{k,\alpha})$. At this point we have enough tools to prove the first assertion of proposition \ref{series cancelation lemma}:
\begin{lemma}
Let $n \geq 1$ be an integer.\\
    \indent (a) If $p$ is a product tree with $|V(p)|=n$, then the cardinality of the set of functions $\{F_q(\psi):q \in [p], F_q(\psi)=F_p(\psi)\}$ is finite. (Here, $F_p(\psi)$ is the derivative-F contribution of $p$). \\    
    \indent (b) For each $p \in \mathcal{P}_{n,+},$ only finitely many $q \in \mathcal{P}_{n,+}$ has a derivative-F contribution that's equal to $p$'s.
\end{lemma}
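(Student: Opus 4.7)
The plan is to leverage the trigonometric-polynomial structure of $f$, whose Fourier support lies in the fixed finite set $\Omega := \{\mathbf{n} \in \mathbb{Z}^2 : |\mathbf{n}| < N\}$. For any node $v$ of a product tree with exponent $q(v)$, the factor $F_v(\psi) = [(\prod_j \partial_{\alpha_{v_j}}) f_{\alpha_v}](S_0^{q(v)} \psi)$ is a trigonometric polynomial whose Fourier support is contained in $S_0^{q(v)} \Omega$ (differentiation does not enlarge the support, and conjugation by $S_0^{q(v)}$ acts linearly on frequencies since $S_0$ is symmetric). Consequently, the (derivative-)F-contribution $F_p(\psi)$ is a trigonometric polynomial whose Fourier support is contained in the Minkowski sum $\sum_{v \in V(p)} S_0^{q(v)} \Omega_v$, where each $\Omega_v \subseteq \Omega$ is a fixed finite subset determined once the sign labels and the derivative flag are chosen.

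For part (a), I would fix the equivalence class $[p]$, which determines the tree shape, all sign labels $\alpha_v$, and the location of the derivative label; only the integer labels $(p_v)_{v \in V(p)}$ remain free. The map $(p_v) \mapsto (q(v))$ is invertible on a fixed topology via $p_v = q(v) - q(\mathrm{parent}(v))$ (with the convention $q(\mathrm{parent}(\mathrm{top})) = 0$), so it suffices to show that only finitely many tuples $(q(v))$ can produce the function $F_p$. The key tool is the hyperbolicity of $S_0$: since $|\lambda_+| > 1 > |\lambda_-|$, one has $S_0^{q(v)} \mathbf{n} \cdot v_+ = \lambda_+^{q(v)} (\mathbf{n} \cdot v_+)$, so the Fourier mode of $F_p$ of maximal projection onto $v_+$ forces the node with the largest $q(v)$ to contribute its maximal $f$-mode, together with suitable choices at the remaining nodes. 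Reading off this extremal frequency pins down $\max_v q(v)$ and the contributing nodes up to finitely many options. Peeling that node off and iterating (and symmetrically using $v_-$ for the minimum of $q(v)$) recovers the entire multiset $\{q(v) : v \in V(p)\}$ with finite ambiguity, establishing (a).

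For part (b), the number of equivalence classes inside $\mathcal{P}_{n,+}$ is already finite: the tree shape is bounded by a Catalan number, the sign labels give at most $2^n$ possibilities, and the derivative node has at most $n$ locations. Hence $\mathcal{P}_{n,+}$ decomposes into finitely many equivalence classes, and (b) follows by applying (a) inside each class and taking the union.

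The main obstacle is making the extreme-mode argument of (a) rigorous, since in principle the dominant Fourier coefficient of $F_p$ could be cancelled by contributions of lower frequency from other terms of the product. The exponential separation provided by $S_0$ rules this out once the extremal $|q(v)|$ is sufficiently large: the mode that $v$ contributes then lies strictly outside the Minkowski sum of supports generated by the remaining nodes, so no cancellation can occur; the complementary regime of bounded $|q(v)|$ contributes only finitely many configurations outright. The bookkeeping reduces to verifying that at each node some Fourier mode of $f$ survives the differential operator $\prod_j \partial_{\alpha_{v_j}}$, which holds as long as $f$ has nontrivial components in both the $v_+$ and $v_-$ directions.
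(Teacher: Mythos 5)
Your part (b) coincides with the paper's (finitely many equivalence classes in $\mathcal{P}_{n,+}$, then reduce to (a) within each class), but your part (a) takes a genuinely different route. The paper's argument is purely combinatorial: within $[p]$ the shape, sign labels and derivative node are fixed, so $F_q$ is the product $g_1(S_0^{q(v_1)}\psi)\cdots g_n(S_0^{q(v_n)}\psi)$ with a fixed ordered list of $g$-contributions; the paper then notes that the map from integer labels to the exponents $(q(v_1)),\dots,(q(v_n))$ is a unipotent upper-triangular integer matrix, hence injective, and that coinciding $g_i$'s contribute at most an $n!$ permutation ambiguity. What the paper leaves implicit --- that genuinely different exponent tuples cannot yield the same \emph{function} except in finitely many ways --- is precisely the analytic content your Fourier argument supplies, so your route is more faithful to the lemma's ``equal as functions'' wording, at the cost of being much heavier. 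Two remarks on closing your sketch. First, the iterative peeling is both delicate (several nodes may carry comparable exponents) and unnecessary: the support of $\widehat{F_p}$ has a unique extremal point in the direction $v_+$ (each per-node maximizer of $n\cdot v_+$ over its support is unique because $v_+$ has irrational slope, and the corresponding coefficient is a product of nonzero numbers), its $v_+$-projection equals $\sum_v \lambda_+^{q(v)}c_v^+$ with all $c_v^+>0$, which bounds every $q(v)$ from above at once; the symmetric computation in the $v_-$ direction, where $|\lambda_-|^{q(v)}$ blows up as $q(v)\to-\infty$, bounds every $q(v)$ from below, and finiteness follows. Second, you must exclude the degenerate case $g_v\equiv 0$ (a constant component of $f$ hit by a derivative), in which $F_p\equiv 0$ and finiteness of actual function equality fails; the paper's formal-product reading sidesteps this, and your reading requires assuming $f$ is nonconstant in each component.
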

\begin{proof}
    For (a), let $v_1,...,v_n$ be an enumeration of elements in $V(p_1)=V(p_2)$ such that nodes from the same tree are consecutive, and if $v_i,v_j$ are on the same tree and $i<j$, then the number of ancestors of $v_i$ is no more than the number of ancestors of $v_j$. (By ancestor of a node $v$, we mean any node along the shortest path from $v$ to the top node). The derivative-F contribution $F(\psi)$ of  any trees in $[p]$ has the form
    \[g_1(S_0^{q(v_1)} \psi)g_2(S_0^{q(v_2)} \psi) \cdots g_n(S_0^{q(v_n)}\psi).\]
    Where $g_i$ are the derivative-$g$ contribution of $v_i \in V(p).$ We will abuse notation and also use $v_i$ to denote the integer label on the node $v_i$. First assume all the $g_i$'s are different. In this case it suffices to show the correspondence $(v_1,v_2,...,v_n) \rightarrow (q(v_1),q(v_2),...,q(v_n))$ is injective as a function from $\mathbb{Z}^n$ to $\mathbb{Z}^n.$ This follows because the matrix of this function is an upper-triangular matrix with all 1's on the diagonal. If some of the $g_i$'s coincides, then at most $n! < \infty$ tuples of $(v_1,v_2,...,v_n)$ gives the same function $F(\psi)$. Thus part (a) Follows.  (b) Follows directly from (a) and Lemma \ref{alternate form of q_n(0) Lemma} because the number of equivalence classes $[p]$ in $\mathcal{P}_{n,+}$ is finite.
\end{proof}
Thus we can now combine terms alike in the series $q_n(0)$ knowing that there could only be a unique reduced form.

\begin{definition}
    Let $\vartheta$ be a derivative tree. Let $\text{Perm}(\vartheta)$ be the set of all derivative trees obtained by permuting the elements in one or more nodes of $\vartheta$. (Recall that a node is a tuple of trees). If $p=\vartheta_1 \times \cdots \times \vartheta_k$ is a product tree, then $\text{Perm}(p) :=\text{Perm}(\vartheta_1) \times \cdots \times \text{Perm}(\vartheta_k).$ If $C$ is a set of product trees, define $\text{Perm}(C)=\cup_{p \in C} \text{Perm}(p).$
\end{definition}
For example, one clearly has $|\text{Perm}(\vartheta)|=\prod_{v \in V(\vartheta)}s_v!.$ 

\begin{definition}
\label{breaking class}
Let $\vartheta$ be a derivative tree with derivative label on $v\in V(\vartheta).$ The $\textbf{main stem}$ $M\vartheta$ of $\vartheta$ is the order tuple of nodes $(v_1,v_2,...,v_k)$ where $v_1$ is the top node of $\vartheta$, $v_k=v$, and $v_i$ is the parent of $v_{i+1}$ for  $i=1,...,k-1.$  The $\textbf{relatives}$ of a node $v_i$ on $M \vartheta$, denoted $R(v_i)$, consists of all nodes $w$ such that $v_i  \succeq  w$ and $v_j \nsucceq
 w$ for all $j >i.$ A tuple of nodes $(w_1,w_2,...,w_j)$ on $M \vartheta$ such that $j<k, w_1 \neq v_1$, $w_1 \succeq \cdots \succeq w_j$, and $\alpha_{w_i}=-\forall i \leq j$ is called a $\textbf{cut}.$ The empty tuple is allowed to be a cut. The $\textbf{breaking}$ of $\vartheta$ according to the cut $(w_1,w_2,...,w_j)$ is the product tree $\vartheta_0 \times \vartheta_1 \times \cdots \times \vartheta_j$ where $\vartheta_i$ is obtained by deleting the node $w_{i+1}$ from the tree $w_i$ while maintaining the order and labels of all the other nodes, and attach a $\partial_-$ on the node where it is broken. Here we call $w_0=\vartheta$ and $w_{j+1}=$ null. The breaking of $\vartheta$ according to the empty tuple is $\vartheta$ itself (identified with the 1-element product). The $\textbf{breaking class}$ of $\vartheta,$ $\text{brk}(\vartheta)$, consists of breaking of $\vartheta$ according to all possible cuts.
\end{definition}

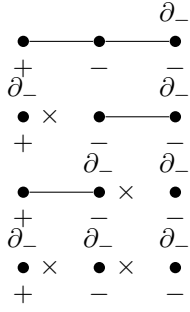
\begin{figure}[h]
    \centering
    \begin{tikzpicture}
    \tikzstyle{vertex} = [circle, fill=black!255, minimum size = 1.5pt, inner sep = 1.5pt]
    \node[vertex] (v_1) at (0, 0) {};
    \node[yshift = -10 pt] at (v_1) {+};
    \node[vertex] (v_2) at (1, 0) {};
    \node[yshift = -10 pt] at (v_2) {$-$};
    \node[vertex] (v_3) at (2, 0) {};
    \node[yshift = 10 pt] at (v_3) {$\partial_-$};
    \node[yshift = -10 pt] at (v_3) {$-$};
    \draw (v_1) -- (v_2) -- (v_3);
    \node[vertex] (v_4) at (0, -1) {};
    \node[yshift = -10 pt] at (v_4) {+};
    \node[yshift = 10 pt] at (v_4) {$\partial_-$};
    \node[xshift = 10 pt] at (v_4) {$\times$};
    \node[vertex] (v_5) at (1, -1) {};
    \node[yshift = -10 pt] at (v_5) {$-$};
    \node[vertex] (v_6) at (2, -1) {};
    \node[yshift = 10 pt] at (v_6) {$\partial_-$};
    \node[yshift = -10 pt] at (v_6) {$-$};
    \draw (v_5) -- (v_6);
    \node[vertex] (v_7) at (0, -2) {};
    \node[yshift = -10 pt] at (v_7) {+};
    \node[vertex] (v_8) at (1, -2) {};
    \node[yshift = -10 pt] at (v_8) {$-$};
    \node[yshift = 10 pt] at (v_8) {$\partial_-$};
    \node[xshift = 10 pt] at (v_8) {$\times$};
    \node[vertex] (v_9) at (2, -2) {};
    \node[yshift = 10 pt] at (v_9) {$\partial_-$};
    \node[yshift = -10 pt] at (v_9) {$-$};
    \draw (v_7) -- (v_8);
    \node[vertex] (v_10) at (0, -3) {};
    \node[yshift = -10 pt] at (v_10) {+};
    \node[yshift = 10 pt] at (v_10) {$\partial_-$};
    \node[xshift = 10 pt] at (v_10) {$\times$};
    \node[vertex] (v_11) at (1, -3) {};
    \node[yshift = -10 pt] at (v_11) {$-$};
    \node[yshift = 10 pt] at (v_11) {$\partial_-$};
    \node[xshift = 10 pt] at (v_11) {$\times$};
    \node[vertex] (v_12) at (2, -3) {};
    \node[yshift = 10 pt] at (v_12) {$\partial_-$};
    \node[yshift = -10 pt] at (v_12) {$-$};
    \end{tikzpicture}
    \caption{Product Trees in one breaking class. Number labels are not shown.} 
\end{figure}
In other words, the breaking class of a tree $\vartheta$ consist of all the product trees formed by "separating" the nodes in the main stem where the there is a minus label, and adding an extra "$\partial_-$" on where it detached from. Thus, if the number of "$-$" labels along the main stem of $\vartheta$ is $k$, then $\text{brk} (\vartheta)$ contains of $2^k$ elements. If $C$ is a set of derivative trees, define $\text{brk}(C)=\cup_{p\in C} \text{brk}(C).$ \\ 
\indent Denote $\mathcal{C}(M \vartheta)$ the set of all possible cuts of $M \vartheta$.  Note that the information of a cut $c\in \mathcal{C}(M \vartheta)$ can be encoded as a fixed length tuple of 0 and 1's, where the length of the tuple is the number of nodes in $M \vartheta$ that has a minus label. For example, if $(v_{i_1},v_{i_2},v_{i_3})$ are all the nodes in $M \vartheta$ that has a minus label, then the tuple $(0,1,1)$ denotes the cut $(v_{i_2},v_{i_3})$, and we shall call this tuple the \textbf{string representation} of the corresponding cut.

\begin{lemma}
    Let $p$ be a product tree with $n$ nodes such that $s(p)=0$ (thus identified with a derivative tree). \\
    \indent (a) If $p_1,p_2 \in \text{Perm}([p])$, then $\text{brk}(\text{Perm}([p_1]))=\text{brk}(\text{Perm}([p_2]))$; \\
    \indent (b) If $p_1 \in \text{Perm}([p])$ but $p_2 \notin \text{Perm}([p])$, then $\text{brk}(\text{Perm}([p_1])) \cap \text{brk}(\text{Perm}([p_2])) = \O$; \\
    \indent (c) $\cup_{\vartheta \in \partial (\Theta_{n,+})}\text{brk}(\text{Perm}([\vartheta]))$ = $\mathcal{P}_{n,+}$.
\end{lemma}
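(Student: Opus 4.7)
The plan is to define a \emph{reassembly} map that sends each product tree $p \in \mathcal{P}_{n,+}$ to a pair (derivative tree, cut) whose breaking recovers $p$, and to use this map uniformly for all three parts. Given $p = \vartheta_0 \times \cdots \times \vartheta_s \in \mathcal{P}_{n,+}$, each factor $\vartheta_i$ with $i < s$ carries a $\partial_-$ on some node of its main stem, and every $\vartheta_{i+1}$ has a $-$-labeled top node. I reassemble by inserting, for $i = 0, \ldots, s-1$, the entire tree $\vartheta_{i+1}$ at the $\partial_-$ position of $\vartheta_i$ (erasing that $\partial_-$ and making the top node of $\vartheta_{i+1}$ the child at that spot), preserving all integer and sign labels elsewhere. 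Iterating produces a single derivative tree $\vartheta \in \partial(\Theta_{n,+})$ whose remaining $\partial_-$ is the one originally on $\vartheta_s$. By construction, the ordered tuple $(w_1, \ldots, w_s)$ of attachment points lies on $M\vartheta$ with each $w_i$ carrying sign $-$, forming a legal cut whose breaking reproduces $p$ exactly. Moreover the pair (tree, cut) is \emph{uniquely} determined by $p$, since the attachment points are recovered as precisely the join points encountered along the main stem of the reassembled tree.

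For (c), the inclusion $\text{brk}(\text{Perm}([\vartheta])) \subseteq \mathcal{P}_{n,+}$ is immediate: breaking preserves the total node count, the first factor inherits the $+$ on the original top, and every subsequent factor's top carries the $-$ of a cut node. The reverse inclusion follows from reassembly: for any $p \in \mathcal{P}_{n,+}$, the reassembled $\vartheta$ lies in $\partial(\Theta_{n,+})$ and $p \in \text{brk}(\vartheta) \subseteq \text{brk}(\text{Perm}([\vartheta]))$. For (a), I would argue that $\text{Perm}([p])$ consists of exactly those derivative trees sharing with $p$ the same unordered sign-labeled shape and the same derivative-node location, with integer labels arbitrary; indeed any such tree $r$ is reached from $p$ by first relabeling integers to match those of $r$ (giving some $q \in [p]$) and then permuting children. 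In particular, $\text{Perm}([p_1]) = \text{Perm}([p])$ as sets for any $p_1 \in \text{Perm}([p])$, so their breaking sets trivially coincide. For (b), if some $q$ lies in $\text{brk}(\text{Perm}([p_1])) \cap \text{brk}(\text{Perm}([p_2]))$, then by uniqueness of reassembly the tree reassembled from $q$ belongs to both orbits, forcing them to coincide by (a)---contradicting $p_2 \notin \text{Perm}([p_1])$.

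The main obstacle is the careful bookkeeping needed to make reassembly rigorous: verifying that each attachment preserves the ordered-tuple structure together with all labels, that the chain of attachment points lies precisely on the main stem of the reassembled tree and inherits the sign constraints of a cut, and---crucially---that no distinct pair $(\vartheta', \text{cut}')$ yields the same product tree. This last uniqueness is what rules out spurious reassemblies and underpins both (b) and the nontrivial inclusion of (c). The checks themselves are not deep; they follow directly from Definition \ref{breaking class}, but demand patience with ordered-tuple pedantry.
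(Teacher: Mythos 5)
Your proposal is correct and follows essentially the same route as the paper: reassembly (invertibility of the breaking procedure) drives parts (b) and (c), and part (a) reduces to the observation that membership in a common $\text{Perm}([\cdot])$ orbit is an equivalence relation. One small caution: the pair (tree, cut) recovered by reassembly is unique only up to permutation of the tuple orderings at the attachment points, not strictly unique as you state, but since every set in sight is closed under $\text{Perm}$ this does not affect the argument.
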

\begin{proof}
    For (a),  note that $p_1,p_2 \in \text{Perm}([p])$ is also an equivalence relation, so $\text{Perm}([p])=\text{Perm}([p_1])=\text{Perm}([p_2])$ and the result follows. For (b) we prove the contrapositive. Assume $p_1\in \text{Perm}([p])$ and there is $p^*=\partial_{v_0}\vartheta_0\times \cdots \times \partial_{v_s}\vartheta_s$ in the intersection. Note that the breaking procedure described in definition \ref{breaking class} is invertible up to permutation, so we could reassemble $p^*$ and get a derivative tree $p'$ satisfying $p' \in \text{Perm}([p_1])$ and $p' \in \text{Perm}([p_2])$. Thus $\text{Perm}([p_1])=\text{Perm}([p_2])$ so $p_2 \in \text{Perm}([p]).$ For part (c), the "$ \subseteq$" direction is immediate. For the reverse, just assemble any product trees in $\mathcal{P}_{n,+}$ as like above, and it will be the tree $\vartheta$.
\end{proof}

This lemma shows that sets of the form $\text{brk}(\text{Perm}([\vartheta]))$ gives a partition of $\mathcal{P}_{n,+}$, and the map $\text{Perm}([\vartheta])\mapsto \text{brk}(\text{Perm}([\vartheta]))$ is a bijection. Since number of equivalence classes $\text{Perm}([\vartheta])$ in $\partial(\Theta_{n,+})$ is finite, along with lemma \ref{alternate form of q_n(0) Lemma}, we just need the following to prove proposition \ref{series cancelation lemma}:
\begin{lemma} 
\label{hard lemma}
Let $\vartheta \in \partial (\Theta_{n,+}).$ Then the formal series
\begin{equation}
\label{series grouped into breaking class}
  \sum_{p \in \text{brk}(\text{Perm}([\vartheta]))} (-1)^{s(p)}\text{Val}(p)   
\end{equation}
converges absolutely after all possible cancellations.
\end{lemma}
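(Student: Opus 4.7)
The plan is to mimic, at arbitrary order, the cancellation mechanism displayed in equations (\ref{del h^2 +})--(\ref{break into two parts}), now organized combinatorially by the cut structure of Definition \ref{breaking class}. Fix $\vartheta \in \partial(\Theta_{n,+})$ and enumerate its non-top, minus-labeled main-stem nodes as $w_1 \succ w_2 \succ \cdots \succ w_k$, letting $v^*$ denote the derivative node and $u_i$ the parent of $w_i$. By the string representation of cuts, the elements of $\text{brk}(\text{Perm}([\vartheta]))$ are indexed, modulo reordering within the relative subtrees, by subsets $S \subseteq \{1,\ldots,k\}$; the corresponding product tree $p_S$ carries the sign $(-1)^{|S|}$. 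First I would expand $\text{Val}(p_S)$ as an iterated sum over the integer labels at every node of $V(\vartheta)$ and separate those labels into main-stem labels and off-stem (``relative'') labels. The relative contributions yield $\Lambda$-factors $\lambda_{\alpha_v}^{-|p_v+1|\alpha_v}$ of modulus decaying geometrically in $|p_v|$ uniformly in $S$, so their partial sums converge absolutely and can be performed first; any potential divergence is therefore localized in the outer sum over the main-stem labels.

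Next I would analyze how cutting alters the $C$, $\Lambda$, $F$, and derivative contributions. Two structural observations drive the argument. First, the telescoping identity that the sum of $q(\cdot)$ values at the derivative nodes of all broken pieces of $p_S$ equals $q_\vartheta(v^*)$ shows that the total $\lambda_-$-exponent contributed by the $\lambda_-^{q(\cdot)}$ factors is invariant in $S$; only its distribution across broken pieces changes. Second, cutting at $w_i$ removes $w_i$ from $u_i$'s child list, so the $C$-contribution at $u_i$ picks up an explicit factor $s_{u_i}$, while the $F$-contribution at $p_S$ agrees with that of $p_\varnothing$ up to an additional $\partial_-$ acting on $F_{u_i}$ and a reindexing that replaces the argument $S_0^{q_\vartheta(w_i)}\psi$ by $S_0^{p_{w_i}}\psi$ in the factor $F_{w_i}$. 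Together these show that $\text{Val}(p_S)$ differs from $\text{Val}(p_\varnothing)$ only by a product, indexed by $S$, of explicit per-$w_i$ modifications.

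Then I would carry out inclusion--exclusion on $S$. After grouping the terms of $\sum_{S}(-1)^{|S|}\text{Val}(p_S)$ by their derivative-$F$ contribution (finiteness of each group is guaranteed by the preceding lemma), the alternating signs paired with the per-$w_i$ factors factorize into $k$ independent ``difference operators'', one for each $w_i$. Each such operator collapses the $p_{w_i}$-independent divergent contribution $\lambda_-^{p_{w_i}}\cdot\lambda_-^{-p_{w_i}-1}=\lambda_-^{-1}$ that was responsible for divergence in Section 3, leaving only a boundary remainder in which the relevant summation range for the $F$-argument index is finite (of size at most $q_\vartheta(u_i)+1$). This is a direct, iterated analog of the cancellation in which the second sum of (\ref{break into two parts}) cancels (\ref{del h+ * del h-}) while the first sum, being finite, survives with genuine geometric decay in the external variables.

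The main obstacle will be executing the inclusion--exclusion carefully when the main stem contains $+$-labeled nodes between consecutive $w_i$'s, when $v^*$ coincides with some $w_i$, and when the relative subtrees at $u_i$ interact with the $\partial_-$ inserted by breaking. I expect the cleanest route is induction on $k$: the inductive step peels off the deepest cuttable node $w_k$, pairs the cut-at-$w_k$ and no-cut-at-$w_k$ contributions by the cancellation above, and re-expresses the remainder in the same canonical form with $k-1$ cuttable nodes. The base case $k=0$ is the unbroken tree, whose value is an iterated geometric sum that is absolutely convergent by direct inspection using the spectral gap $\max(|\lambda_-|,|\lambda_+|^{-1})<1$ and the bounded derivatives of the trigonometric polynomial $f$.
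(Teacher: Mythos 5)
Your proposal follows essentially the same route as the paper's proof: group the elements of $\text{brk}(\text{Perm}([\vartheta]))$ by cut, perform the absolutely convergent off-stem sums first, and then cancel conjugate cuts by a change of index so that each minus-labeled main-stem variable is left ranging over a finite, ancestor-dependent interval, after which the surviving series converges geometrically (the paper bounds it via polylogarithms and Eulerian numbers). The paper implements your ``$k$ independent difference operators'' as an explicit iteration over conjugate pairs of string representations, one minus-labeled node at a time, which is exactly your induction on $k$; the mechanism and estimates coincide.
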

\begin{proof}
    Let's begin with the special case where $\vartheta$ is linear and $\partial_-$ is labeled on the smallest node (with respect to $\succeq$). Let $M\vartheta=(v_1,...,v_k)$ be the main stem of $\vartheta$, which in this case are exactly all of the nodes in $\vartheta$ (Even though $k=n$, the reason we are using $k$ instead of $n$ will be clear later). Furthermore, there is only one permutation for any linear derivative trees or product of linear derivative trees (i.e., itself). Now, (\ref{series grouped into breaking class}) could be regrouped according to the type of the cut :
    \begin{equation}
    \label{group according to cuts}
        \sum_{c \in \mathcal{C}(M \vartheta)} \sum_{\substack{p \in \text{brk}(\text{Perm}([\vartheta]) \\ c \rightarrow p}} (-1)^{|c|} \text{Val}(p),
    \end{equation}
    where $\mathcal{C}(M \vartheta)$ denotes all cuts on $M \vartheta,$ $c\rightarrow p$ represents the relation "$p$ is a product tree formed from the cut $c$", and $|c|$ denotes the size of the cut (number of 1's in the string representation). Observe for any $c \in \mathcal{C}(M \vartheta)$, the inside sum of (\ref{group according to cuts}) could be re-written as
    \begin{equation}
       \mathcal{S}_c=(-1)^{|c|} \sum_{a_1,...,a_k} \alpha \Lambda (A_1^c)_1(A_2^c)_2\cdots(A_j^c)_k,
    \end{equation}
    where: \\
    \indent (i) The range of $\alpha_i$ is in $\mathbb{Z}_{\alpha_{v_i}}$; \\
   \indent (ii) $\alpha$ is the product of all the sign labels on $V(\vartheta)$;   \\
   \indent (iii) the numbers $A^c_i$ are defined as $a_{j}+a_{j+1}+...+a_{i}$ where $v_j \in c$ but $v_{j+1},v_{j+2},...,v_i \notin c$. If there is no $j <i$ such that $v_j \in c$, then $A^c_i:=a_1+...+a_i$.\\
   \indent (iv) the parenthesis notation is a short hand way of denoting 
    \[(x)_i=g_i(S_0^{x} \psi),\]
    \indent where $g_i$ is the derivative-$g$ contribution of $v_i$; and \\
   \indent (v) $\Lambda$ is a function in $a_1,....,a_n$ that is summable in $a_i$ if $\alpha_{v_i}=+$, and not summable otherwise. \\
   It's easy to see that $\alpha$ and $\Lambda$ are invariant under any $c \in \mathcal{C}(M \vartheta).$ \\
   \indent Now we describe the strategy for cancellation. Let $v_{m_1},...,v_{m_s}$ be all the nodes on $M \vartheta$ whose sign labels are negative. For a tuple $t$ of 0's and 1's, we call the \textbf{conjugate} of $t$, denoted $\overline{t}$ to be the tuple obtained by switching the first element in $t$ from 1 to 0 (or from 0 to 1). For a cut $c \in \mathcal{C}^{(0)}(M\vartheta):=\mathcal{C}(M\vartheta)$, we defined its $\textbf{conjugate}$ $ \overline{c}$ to be the cut whose string representation is the conjugate of that of $c$. We now run the following algorithm with $i=1.$  \\
   \indent Now the set $\mathcal{C}^{(i-1)}(M\vartheta)$ has already been defined. Let $\mathcal{C}^{(i)}(M\vartheta)$ be the set of two-element sets of the form $\{c^{i-1},\overline{c^{i-1}}\}$ for $c^{i-1} \in \mathcal{C}^{(i-1)}(M\vartheta)$. Note that $|\mathcal{C}^{(i)}(M\vartheta)|=2^{s-i}$. Then for $c^i=\{c^{i-1},\overline{c^{i-1}}\} \in \mathcal{C}^{(i)}(M\vartheta)$, we set $\mathcal{S}_{c^i}=\mathcal{S}_{c^{i-1}}+\mathcal{S}_{\overline{c^{i-1}}}$, and we will show that a significant amount of cancellation occurs within $\mathcal{S}_{c^i}$. An element $\{c^{i-1},\overline{c^{i-1}}\}$ in $\mathcal{C}^{(i)}(M\vartheta)$ can be described as a $s-i$ tuple of 0's and 1's, which is formed from deleting the first element of the string representation of $c^{i-1}$ (or $\overline{c^{i-1}}$), which is a $s-i+1$ tuple. We call this tuple the \textbf{string representation} of $\{c^{i-1}, \overline{c^{i-1}}\}$. Two elements in $\mathcal{C}^{i}(M\vartheta)$ are \textbf{conjugate} to each other if their string representations are. Now repeat the procedure in this paragraph with $i=i+1.$ Stop repeating until we reach a single series $\mathcal{S}_{c^s}$. We show $\mathcal{S}_{c^s}$ converges.\\
   \indent We now follow the procedure. Choose $c, \overline{c} \in \mathcal{C}^{(0)}(M \vartheta)$. Assume $c$ is the cut whose whose string representation starts at 0. One has, by a change of index,
   \begin{align}
       \mathcal{S}_c = & (-1)^{|c|}\sum_{a_1,...,a_{(m_1-1)}} \sum_{a_{m_1}=-1}^{-a_1-...-a_{(m_1-1)}} \sum_{a_{({m_1}+1)},...,a_k} \alpha \Lambda (A_1^c)_1(A_2^c)_2\cdots(A_j^c)_k + \notag \\
        & (-1)^{|c|}\sum_{a_1,...,a_{m_1-1}} \quad \sum_{a_{m_1}=(-a_1-...-a_{(m_1-1)})-1}^{-\infty} \quad \sum_{a_{({m_1}+1)},...,a_k} \alpha \Lambda (A_1^c)_1(A_2^c)_2\cdots(A_j^c)_k  \notag \\
        = &  (-1)^{|c|}\sum_{a_1,...,a_{(m_1-1)}} \sum_{a_{m_1}=-1}^{-a_1-...-a_{(m_1-1)}} \sum_{a_{({m_1}+1)},...,a_k} \alpha \Lambda (A_1^c)_1(A_2^c)_2\cdots(A_j^c)_k + \notag \\
        & (-1)^{|c|}\sum_{a_1,...,a_{(m_1-1)}} \quad \sum_{a_{m_1}=-1}^{-\infty} \quad \sum_{a_{({m_1}+1)},...,a_k} \alpha \Lambda (A_1^{\overline{c}})_1(A_2^{\overline{c}})_2\cdots(A_j^{\overline{c}})_k  \notag \\
        = & (-1)^{|c|}\sum_{a_1,...,a_{(m_1-1)}} \sum_{a_{m_1}=-1}^{-a_1-...-a_{(m_1-1)}} \sum_{a_{({m_1}+1)},...,a_k} \alpha \Lambda (A_1^c)_1(A_2^c)_2\cdots(A_j^c)_k + (- \mathcal{S}_{\overline{c}}).\notag
   \end{align}
  Here the change of index is valid as $\Lambda$ does not depend on $a_{m_1}$. Now
  \begin{equation}
  \label{first cancelation}
      \mathcal{S}_{\{c,\overline{c}\}}= (-1)^{|c|}\sum_{a_1,...,a_{m_1-1}} \sum_{a_{m_1}=-1}^{-a_1-...-a_{m_1-1}} \sum_{a_{({m_1}+1)},...,a_k} \alpha \Lambda (A_1^c)_1(A_2^c)_2\cdots(A_j^c)_k .
  \end{equation}
   Note that after the above cancellation, the series  $\mathcal{S}_{\{c, \overline{c}\}}$ is somewhat "better" than 
   $\mathcal{S}_c$ and $\mathcal{S}_{\overline{c}}$ in the sense that we remove the divergent variable $a_{m_1}$. Furthermore, we now went from $2^s$ series of the form $\{\mathcal{S}_c:c\in \mathcal{C}^{(0)}(M \vartheta)\}$ down to $2^{s-1}$ series of the form $\{ \mathcal{S}_{ \{c,\overline{c}\} }:c\in \mathcal{C}^{(0)}(M \vartheta)\}=\{ \mathcal{S}_{c^1}:c^1\in \mathcal{C}^{(1)}(M \vartheta)\}$. We re-denote (\ref{first cancelation}) as, for every $s^1 \in \mathcal{C}^{(1)}(M \vartheta), $
  \begin{equation}
  \label{first cancel, simplified}
      \mathcal{S}_{c^1}= (-1)^{|c^1|}\sum_{a_1,...,a_k} \alpha \Lambda (A_1^{c^1})_1(A_2^{c^1})_2\cdots(A_j^{c^1})_k,
  \end{equation}
    where $|c^1|$ denotes the number of 1's in the string representation of $c^1$, and set $A^{c^1}_i:=A^{c}_i$, where $c$ is the element in $c^1$ whose string representation begin with 0. Each $a_i$'s are understood to have the range in (\ref{first cancelation}). Observe that since if $c^1=\{c, \overline{c}\}$, and $c$ starts with 0, then $|c|=|c^1|$. Now we perform the cancellation again, observe that in (\ref{first cancel, simplified}), since $-a_1-...-a_{(a_2-1)} \leq -1$ for every $a_1,...,a_{(a_2-1)}$ in its range, one has, by essentially the same change of index as above,
    \begin{align}
        \mathcal{S}_{c^1} = & (-1)^{|c^1|}  \sum_{a_1,...,a_{(m_2-1)}} \quad 
        \sum_{a_{m_2}=-1}^{-a_1-...-a_{(m_2-1)}} \sum_{a_{({m_2}+1)},...,a_k} \alpha \Lambda (A_1^{c^1})_1(A_2^{c^1})_2\cdots(A_j^{c^1})_k + \notag \\
        & (-1)^{|c^1|}  \sum_{a_1,...,a_{(m_2-1)}} \quad 
        \sum_{a_{m_2}=-a_1-...-a_{(m_2-1)}-1}^{-\infty}\quad  \sum_{a_{({m_2}+1)},...,a_k} \alpha \Lambda (A_1^{c^1})_1(A_2^{c^1})_2\cdots(A_j^{c^1})_k \notag \\
        = & (-1)^{|c^1|}  \sum_{a_1,...,a_{(m_2-1)}} \quad 
        \sum_{a_{m_2}=-1}^{-a_1-...-a_{(m_2-1)}} \sum_{a_{({m_2}+1)},...,a_k} \alpha \Lambda (A_1^{c^1})_1(A_2^{c^1})_2\cdots(A_j^{c^1})_k + \notag \\
        &   (-1)^{|c^1|}\sum_{a_1,...,a_{(m_2-1)}} \quad \sum_{a_{m_2}=-1}^{-\infty} \quad \sum_{a_{({m_2}+1)},...,a_k} \alpha \Lambda (A_1^{\overline{c^1}})_1(A_2^{\overline{c^1}})_2\cdots(A_j^{\overline{c^1}})_k  \notag \\
        = & (-1)^{|c^1|}\sum_{a_1,...,a_{(m_2-1)}} \sum_{a_{m_2}=-1}^{-a_1-...-a_{(m_2-1)}} \sum_{a_{({m_2}+1)},...,a_k} \alpha \Lambda (A_1^{c^1})_1(A_2^{c^1})_2\cdots(A_j^{c^1})_k + (- \mathcal{S}_{\overline{c^1}}).\notag
    \end{align}
    Still, we are allowed to do these since $\Lambda$ does not depends on $a_{m_2}$. We obtain
    \[\mathcal{S}_{\{c^1,\overline{c^1}\}}= (-1)^{|c^1|}\sum_{a_1,...,a_{m_2-1}} \sum_{a_{m_2}=-1}^{-a_1-...-a_{m_2-1}} \sum_{a_{({m_2}+1)},...,a_k} \alpha \Lambda (A_1^{c^1})_1(A_2^{c^1})_2\cdots(A_j^{c^1})_k .\]
    (Compare (\ref{first cancelation})). This procedure will continue as described previously, until we left with a single series
    \begin{equation}
    \label{fully canceld}
        \mathcal{S}_{c^s}= \sum_{a_1,...,a_k} \alpha \Lambda(a_1)_1(a_1+a_2)_2\cdots (a_1+a_2+...+a_k)_k,
    \end{equation}
    where the range of $a_j$ is $[-a_1-...-a_{j-1},-1] \cap \mathbb{Z}$ if $\alpha_{v_{a_j}}=-$, and is $\mathbb{Z}^+$ otherwise. Observe that since $(a_1)_1(a_1+a_2)_2\cdots (a_1+a_2+...+a_k)_k$ is bounded by some constant $M$ for all inputs of $a_j,$
    we get (assume $r$ is the number of positive labels on the main stem)
    \begin{align}
    \label{S_c bound}
         |\mathcal{S}_{c^s}| & \leq \sum_{n_1=0}^{\infty} \sum_{n_2=0}^{\infty} \cdots \sum_{n_r=0}^{\infty} M n_1 \cdot (n_1+n_2) \cdots (n_1+n_2+...+n_r) \lambda_+^{-2n_1} \cdots \lambda_+^{-2n_r} \notag \\
         & \leq M \sum_{n_1=0}^{\infty} \sum_{n_2=0}^{\infty} \cdots \sum_{n_r=0}^{\infty} [(n_1+1)(n_2+1)\cdots (n_r+1)]^r \lambda_+^{-2n_r} \cdots \lambda_+^{-2n_r} \notag \\
         & = M \left [  \sum_{n_1=0}^{\infty} (n_1+1)^r \lambda_+^{-2n_1} \right] \cdots \left [  \sum_{n_r=0}^{\infty} (n_r+1)^r \lambda_+^{-2n_r} \right] \notag \\
         & = M \frac{\lambda_+^{-2r} [A_r(\lambda_+^{-2})]^r}{(1-\lambda_+^{-2})^{r(r+1)}},
    \end{align}
    The last equality follows from the explicit formula for the polylogarithm $\text{Li}_s(z)$ at $s=-r$ and $z=\lambda_+^{-2}.$ The function $A_r(x)$ is the monic polynomial of degree $r$ defined as $\sum_{k=0}^{r} A(r,k)x^k$, where $A(r,k)$ are Eulerian Numbers  \cite{Wood1992}. Thus (\ref{fully canceld}) converges absolutely in the ordinary sense. \\
    \indent The proof for the general case is almost exactly the same as the proof for the linear case. The exact same proof could be followed again with the understanding that $\vartheta$ is now a general derivative tree (the notations are designed to fit the new context as well), except that the parenthesis notation $(x)_i$ now refer to the formal series (assuming $v_{i_1},...,v_{i_w}$ are all the elements in $R(v_i)$ excluding $v_i$)
    \[(x)_i:=\sum_{b_1,...,b_w} \Lambda_{R(v_i)} g_i(S_0^{x} \psi) g_{i_1}(S_0^{x+B_1} \psi) \cdots g_{i_w}(S_0^{x+B_w} \psi) \]
     where $b_d \in \mathbb{Z}_{\alpha_{v_{i_d}}}$, and they are understood to be the integer labels on $v_{i,d}$; $\Lambda_{R(v_i)}$ is the product of all the derivative-$\Lambda$ contributions of $v\in R(v_i), v \neq v_i;$ $g_i$ is the derivative-$g$ contribution on $v_i$; $g_{i_d}$ is the derivative-$g$ contribution on $v_{i_d}$;  and $B_d$ is defined to be the sum of all the $b_e$'s such that $v_{i_e}$ is an ancestor (or itself) of $v_{i_d}$, and $v_{i_e} \in R(v_i) -\{v_i\}.$  Furthermore, equation (\ref{S_c bound}) should sum over the nodes that's not on the main stem as well (which is also convergent).\\
     \indent Finally, note that equation (\ref{first cancelation}) stays the same without the constant $C$ (which consists of the product of all the reciprocals of factorials) thanks to our use of the Perm operation. This concludes the proof.
\end{proof}
The proposition suggest that we can define
$\text{Val}[q_n(0)]$, the value of $q_n(0)$ after all the cancellations, to be the (finite) sum of all series of the form (\ref{fully canceld}). It is a finite sum since each such series corresponds to an element of $H\partial(\Theta_{k,\alpha})$, which is a finite set. \\
\indent In notation, if for $E \in H\partial(\Theta_{k,\alpha})$, we define $\Tilde{E}$ to be the subset of $E$ containing all derivative trees $\vartheta$ such that if the sign label $\alpha_v$ on $v \in V(\vartheta)$ is $-,$ then the integer label $p_v$ is in the range $[-p_{v_1}-...-p_{v_j},-1]\cap \mathbb{Z}$, where $v_1,...,v_j$ are all the ancestors of the node $v$, and $p_{v_i}$ are the corresponding integer labels. Now if we set

$$\text{Val}(E)=\sum_{\vartheta \in \Tilde{E}} \text{Val}(\vartheta),$$
then,
\begin{equation}
    \text{Val}[q_n(0)] = \sum_{E \in H\partial(\Theta_{n,+})} \text{Val}(E).
\end{equation}

So far, all the work we've done are at a formal level, and we still have to make everything work at a rigours level. As we will see, the rigours procedure is almost exactly the same as our formal series procedure. We will develop this in the next section. 

\section{From Formal Series to A Rigours Procedure}
Lets begin with proving (iii) of Remark \ref{proof steps}. We will use the method of Fourier Transform. For a function $g$ defined on $\mathbb{T}^2=\mathbb{R}^2/\mathbb{Z}^2$, we set
\[\hat{g}(n)=\int_{\mathbb{T}^2}g(x)e^{-in \cdot x}dx.\]
Some elementary properties are summarized below. See \cite{Folland} for detailed proof.
\begin{proposition}
    Let $f:\mathbb{T}^2 \rightarrow \mathbb{R}$. Then: \\
    \indent (a) $\widehat{\partial_v f}(n)=i(v\cdot n) \hat{f}(n)$; \\
    \indent (b) If $T \in GL(n,\mathbb{Z})$ and $\det T=\pm1$, then $f \circ T$ is well defined on $\mathbb{T}^2,$ and $\widehat{f \circ T}(n)=\hat{f}(S^{-a}n)$. Here, $S=(T^*)^{-1}.$ 
\end{proposition}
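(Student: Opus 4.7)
The plan is to prove both identities by unpacking the defining integral of the Fourier transform, using the two elementary facts that the torus has no boundary and that unimodular integer matrices descend to volume-preserving bijections of $\mathbb{T}^2$.

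For part (a), I would start with
\[
\widehat{\partial_v f}(n) = \int_{\mathbb{T}^2} (\partial_v f)(x)\, e^{-in\cdot x}\, dx = \sum_j v_j \int_{\mathbb{T}^2} (\partial_{x_j} f)(x)\, e^{-in\cdot x}\, dx,
\]
and integrate by parts in each coordinate $x_j$. Because both $f$ and $e^{-in\cdot x}$ are periodic, integration by parts on $\mathbb{T}^2$ produces no boundary contribution. Using $\partial_{x_j} e^{-in\cdot x} = -in_j e^{-in\cdot x}$ and reassembling the components yields the factor $i(v\cdot n)\hat f(n)$.

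For part (b), the first step is to verify that $T$ descends to a well-defined self-map of $\mathbb{T}^2$. Since $T \in GL(n,\mathbb{Z})$ with $|\det T| = 1$, its inverse is also integer-valued, so $T$ permutes $\mathbb{Z}^2$ bijectively and hence induces a smooth bijection of $\mathbb{T}^2$; in particular $f \circ T$ is well defined. Starting from
\[
\widehat{f\circ T}(n) = \int_{\mathbb{T}^2} f(Tx)\, e^{-in\cdot x}\, dx,
\]
I would then substitute $y = Tx$: this is a bijection of $\mathbb{T}^2$ onto itself with Jacobian of absolute value $1$, so the integral becomes $\int_{\mathbb{T}^2} f(y)\, e^{-in\cdot T^{-1} y}\, dy$. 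Rewriting the exponent via $n\cdot T^{-1}y = ((T^*)^{-1}n)\cdot y = (Sn)\cdot y$ gives $\hat f(Sn)$, matching the stated identity under the convention $S = (T^*)^{-1}$.

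There is no substantive obstacle; both computations collapse to one-line manipulations once the correct point of view is taken. The only piece of bookkeeping worth highlighting is the verification in (b) that the substitution $y = Tx$ is a genuine bijection of $\mathbb{T}^2$, which is precisely the content of the unimodularity assumption $|\det T| = 1$ combined with $T \in GL(n,\mathbb{Z})$.
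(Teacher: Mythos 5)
Your proof is correct. The paper itself does not prove this proposition --- it simply defers to the cited reference --- and your argument is exactly the standard one: integration by parts with no boundary term for (a), and the unimodular change of variables $y = Tx$ together with $n\cdot T^{-1}y = ((T^{*})^{-1}n)\cdot y$ for (b). Note that the paper's conclusion ``$\widehat{f\circ T}(n)=\hat f(S^{-a}n)$'' contains a typographical slip (there is no $a$ in scope); your derivation $\widehat{f\circ T}(n)=\hat f(Sn)$ with $S=(T^{*})^{-1}$ is the intended statement, consistent with how the identity is later applied to $T=S_0^{p(v)}$.
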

With these tools, we could prove the following:
\begin{proposition} 
\label{converge prop}
There exist $D>0$ such that the series $\sum_{k=0}^{\infty} \epsilon ^k \text{Val} [q_k(0)] $ converges absolutely for $|\epsilon|<D.$
\end{proposition}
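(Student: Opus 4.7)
The plan is to produce an exponential bound $|\text{Val}[q_n(0)]|\leq C_0 B^n$ with $C_0,B>0$ independent of $n$; taking $D=1/B$ then gives absolute convergence of $\sum_k\epsilon^k\text{Val}[q_k(0)]$ for $|\epsilon|<D$ by comparison with the geometric series. I would use the Fourier identities stated in the previous proposition to convert the per-tree estimates from Lemma \ref{hard lemma} into geometric bounds driven by the finitely supported Fourier modes of $f$.

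First I would write every node contribution in Fourier. Since $f$ is a trigonometric polynomial of degree $<N$, each factor $\left(\prod_j \partial_{\alpha_{v_j}}\right) f_\alpha(S_0^{q(v)}\psi)$ expands as
\[
\sum_{|\mathbf{n}|<N} i^{s_v}\Big(\prod_{j=1}^{s_v}(v_{\alpha_{v_j}}\cdot\mathbf{n})\Big)\mathbf{c}_\mathbf{n}\, e^{i\mathbf{n}\cdot S_0^{q(v)}\psi},
\]
whose sup-norm is bounded by $(N\max_{\pm}\|v_\pm\|)^{s_v}\sum_{|\mathbf{n}|<N}|\mathbf{c}_\mathbf{n}|=:M_f^{s_v+1}$. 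Multiplying across the nodes of a representative $\vartheta\in\tilde E$ for $E\in H\partial(\Theta_{n,+})$ and invoking the bound (\ref{S_c bound}) (extended to branched trees as sketched at the end of the proof of Lemma \ref{hard lemma}) yields an estimate of the shape
\[
|\text{Val}(E)|\leq \Big(\prod_{v\in V(\vartheta)}\frac{M_f^{s_v+1}}{s_v!}\Big)\cdot K(\lambda_+,\vartheta),
\]
where $K(\lambda_+,\vartheta)$ collects the geometric and polylogarithmic factors produced by summing over the integer labels.

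Next I would count and combine equivalence classes. The number of unlabeled rooted plane trees on $n$ nodes is the Catalan number $C_{n-1}\leq 4^n$; attaching sign labels and picking a derivative position multiplies this by $2^n\cdot n$, so $|H\partial(\Theta_{n,+})|\leq n\cdot 8^n$. The weight $\prod_v 1/s_v!$ coming from the $C$-contribution effectively converts the plane-tree sum into a sum over \emph{unordered} rooted trees, whose count still grows at most exponentially in $n$. Combining these factors with the per-class bound above gives
\[
|\text{Val}[q_n(0)]|\leq C_0 B^n
\]
for some $B=B(f,\lambda_+,N)>0$, from which the proposition follows immediately.

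The main obstacle is controlling the super-exponential growth of the polylogarithmic factors $[A_r(\lambda_+^{-2})]^r$ that appear in the general analogue of (\ref{S_c bound}): these can grow like $(r!)^r$ and are not automatically absorbed by the $\prod_v 1/s_v!$ prefactors. Overcoming this requires either bounding the product $(a_1)_1\cdots(a_1+\cdots+a_k)_k$ \emph{jointly} through its Fourier representation, so that the exponential $e^{i\mathbf{n}\cdot S_0^{a_1+\cdots+a_k}\psi}$ localizes the nested sums rather than forcing the trivial pointwise bound $M^k$, or reorganizing $\sum_E\text{Val}(E)$ so that the Perm-induced cancellations of Lemma \ref{hard lemma} operate at the level of whole blocks of classes rather than one class at a time. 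Once a genuine $C^n$ estimate is obtained per class (or per block), the remaining tree combinatorics is routine and the geometric-series conclusion follows.
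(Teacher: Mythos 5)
Your strategy is the same as the paper's: expand each node's contribution in Fourier modes (which are finitely many because $f$ is a trigonometric polynomial), bound the derivative factors by powers of $N$, count the labeled trees and derivative positions by an exponential factor (Catalan $\times$ signs $\times$ derivative node, matching the paper's $2^{2k}\cdot 2^k\cdot k$), and multiply by the $\Lambda$-sum estimate inherited from Lemma \ref{hard lemma}. However, as written the proposal is not a proof, because the one estimate that everything hinges on --- a bound $|\text{Val}[q_n(0)]|\leq C_0B^n$ with $B$ independent of $n$ --- is never established. You candidly acknowledge this yourself: the factor $\lambda_+^{-2r}[A_r(\lambda_+^{-2})]^r/(1-\lambda_+^{-2})^{r(r+1)}$ coming from (\ref{S_c bound}) grows like $(r!)^r$ times $e^{cr^2}$, which destroys any hope of a geometric bound, and the two remedies you float (exploiting Fourier localization of the nested sums jointly, or reorganizing the cancellation across blocks of equivalence classes) are named but not carried out. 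A plan with an admitted unresolved obstacle at its center is a gap, not a route around it.

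It is worth saying that you have put your finger on a real soft spot in the paper's own argument: the displayed bound (\ref{bound after fourier}) contains exactly this factor with $r=k$, and the concluding sentence "by elementary theory of power series it is easy to see that the series has a positive radius of convergence" does not follow from a bound of size $(k!)^k$; the radius of convergence of $\sum_k\epsilon^k(k!)^k$ is zero. So either the paper's estimate is far from sharp (e.g., the crude majorization $n_1(n_1+n_2)\cdots(n_1+\cdots+n_r)\leq[(n_1+1)\cdots(n_r+1)]^r$ must be replaced by something that, combined with the $\prod_v 1/s_v!$ weights and the restriction to finitely many Fourier modes, yields a genuinely exponential bound), or an additional idea is needed. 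To complete your proposal you would have to actually produce that sharper per-class estimate rather than defer it; until then the comparison with a geometric series, and hence the existence of $D>0$, is not justified.
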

\begin{proof}
    Fix a derivative tree $\vartheta$ and $v,w \in V(\vartheta)$. Assume $v$ is not on the main stem of $\vartheta$, but $w$ is. Use $\text{Val}_v(\psi),\text{Val}_w(\psi)$ to denote $\text{Val}(v, \psi), \text{Val}(w, \psi)$. Since
    \[\text{Val}_v (\psi)=\frac{\alpha_v}{s_v!}\lambda_{\alpha_v}^{-|p_v+1|\alpha_v} \left( \prod_{j=1}^{s_v} \partial_{a_{v_j}}\right)f_{\alpha_v}(S_0^{p(v)} \psi);\]
    \[\text{Val}_w (\psi)=\frac{\alpha_w}{s_w!}\lambda_{\alpha_v}^{-|p_v+1|\alpha_v+p_w} \left(\partial_- \prod_{j=1}^{s_v} \partial_{a_{v_j}}\right)f_{\alpha_v}(S_0^{p(w)} \psi).\]
    Taking the Fourier Transform, we have, by the above proposition, 
    \[\widehat{\text{Val}_v}(n)=\frac{\alpha_v}{s_v!}\lambda_{\alpha_v}^{-|p_v+1|\alpha_v} \prod_{j=1}^{s_v} \left (i v_{\alpha_{v_j}} \cdot S_0^{-p(v)} n \right ) \hat{f}_{\alpha_v}(S_0^{-p(v)}n);\]
     \[\widehat{\text{Val}_w}(n)=\frac{\alpha_w}{s_w!}\lambda_{\alpha_v}^{-|p_v+1|\alpha_v+p_w} \left (i v_{-} \cdot S_0^{-p(v)} n \right ) \prod_{j=1}^{s_v} \left (i v_{\alpha_{v_j}} \cdot S_0^{-p(v)} n \right ) \hat{f}_{\alpha_v}(S_0^{-p(v)}n ).\]
     Therefore, one now has $\text{Val}_v(\psi)=\sum_{n \in \mathbb{Z}^2}e^{in\cdot \psi}\widehat{\text{Val}_v}(n)$ and $\text{Val}_v(\psi)=\sum_{n \in \mathbb{Z}^2}e^{in\cdot \psi}\widehat{\text{Val}_v}(n)$. At this point we introduce more definition. By an $\textit{advanced derivative tree}$ we mean a derivative tree with an extra label $n_v \in \mathbb{Z}^2$ on each of the nodes. Two advanced derivative trees are equal if and only if they are equal as derivative trees, and all the $n_v$ labels agree. If $\vartheta$ is an advanced derivative tree, we let $n(\vartheta)=\sum_{v \in V(\vartheta)}n_v$. Let $\widetilde{\Theta}_{k,n,\alpha}$  be the set of all advanced derivative trees $\vartheta$ with $k$ nodes such that the label on the top node is $\alpha$, $n(\vartheta)=n,$ and as derivative trees, $\vartheta \in \widetilde{H\partial}(\Theta_{k,\alpha}):= \bigcup_{E \in H \partial (\Theta_{k,\alpha})} \bigcup_{\vartheta '\in \Tilde{E}} \vartheta'$. \\
     \indent Now note that $\text{Val}[q_k(0)]=\sum_{n \in \mathbb{Z}^2}e^{in \cdot \psi}  \widehat{\text{Val}[q_k(0)]}(n),$ where 
\begin{align}
\label{fourier of q_k(0)}
\widehat{\text{Val}[q_k(0)]}(n) &=\sum_{\vartheta \in \widetilde{\Theta}_{k,n,\alpha}} \left[ \prod_{\substack{v \in V(\vartheta) \\ v \notin M \vartheta}} \left( \frac{\alpha_v}{s_v!}\lambda_{\alpha_v}^{-|p_v+1|\alpha_v} \prod_{j=1}^{s_v} \left(iv_{\alpha_{v_j}} \cdot S_0^{-p(v)} n_v\right) \hat{f}_{\alpha_v}(S_0^{-p(v)}n_v)  \right) \cdot \right. \notag \\
& \left. \prod_{\substack{v \in M \vartheta}} \left( \frac{\alpha_w}{s_w!}\lambda_{\alpha_v}^{-|p_v+1|\alpha_v+p_w} \left(iv_{-} \cdot S_0^{-p(v)} n\right) \prod_{j=1}^{s_v} \left(iv_{\alpha_{v_j}} \cdot S_0^{-p(v)} n_v\right) \hat{f}_{\alpha_v}(S_0^{-p(v)}n_v) \right) \right] .
\end{align}
Let $F=\text{max}_{\psi \in \mathbb{T}^2} |f_{\pm}(\psi)|$. Note that since $f$ is a trigonometric polynomial, at most $(2N+1)^2$ terms in the summand of (\ref{fourier of q_k(0)}) is non-zero. Since $|v_{\pm}|=1$, when the terms are non-zero it is bounded by $(2N)^k$. Thus we get 
\begin{equation}
\label{bound after fourier}
   |\text{Val}[q_k(0)]|=\sum_{n \in \mathbb{Z}^2}|\widehat{\text{Val}[q_k(0)]}(n)| \leq (2N)^k\cdot F^k \cdot (2N+1)^k \cdot 2^k \cdot 2^{2k}\cdot k \cdot \frac{\lambda_+^{-2k} [A_k(\lambda_+^{-2})]^k}{(1-\lambda_+^{-2})^{k(k+1)}}, 
\end{equation}

where the factor $2^k$ comes from the possibilities of having  + or $-$ label on each node, $2^{2k}$ is the max number of trees with $k$ nodes. The factor $k$ is to included since the derivative label could be on each of the node (except for the top node). The last factor is computed in (\ref{S_c bound}), which is obtained from summing the lambda factors over all the integer labels first. Thus by elementary theory of power series it is easy to see that $\sum_{k=0}^{\infty} \epsilon ^k \text{Val} [q_k(0)]$ has a positive radius of convergence.
\end{proof}

\begin{proposition} For every $n \geq 1,$ $\lim_{t \rightarrow 0} q_n(t)=\text{Val}[q_n(0)].$
\end{proposition}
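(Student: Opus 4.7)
The plan is to carry out, at finite $t > 0$, the same reorganization and cancellation that produced $\text{Val}[q_n(0)]$ formally, and then pass to the limit $t \to 0$ by dominated convergence. The key observation is that $q_n(t)$, being a polynomial of bounded degree in the $V_\alpha^{(k)}(\psi,t)$'s, admits a tree representation parallel to Lemma \ref{alternate form of q_n(0) Lemma}, with the formal directional derivative $\partial_-$ replaced throughout by the finite-difference operator $D_t g(\psi) := (g(\psi + tv_-) - g(\psi))/t$ acting on the designated ``derivative node."

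First I would expand each $h_\alpha^{(k)}$ using (\ref{h^(k)}) and apply the telescoping product rule
\begin{equation*}
\prod_{v \in V(\vartheta)} F_v(\psi+tv_-) - \prod_{v \in V(\vartheta)} F_v(\psi) = \sum_{v^* \in V(\vartheta)} \bigl(F_{v^*}(\psi+tv_-) - F_{v^*}(\psi)\bigr) \cdot R_{v^*}(\psi, t),
\end{equation*}
where $R_{v^*}$ is a product of the remaining $F_w$'s evaluated at either $\psi$ or $\psi + tv_-$ according to a fixed ordering. This realizes $V_\alpha^{(k)}(\psi,t)$ as a convergent sum over $\partial(\Theta_{k,\alpha})$ of ``finite-difference derivative trees,'' whose value is the obvious $t$-analog $\text{Val}_t$ of $\text{Val}$. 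Absolute convergence of $h^{(k)}$ as a tree sum (the bound underlying the H\"older continuity of $H$) justifies the exchange of summations at each fixed $t \ne 0$. Plugging into (\ref{q_n(t)}) and repeating the bilinearity argument that gave Lemma \ref{alternate form of q_n(0) Lemma} then yields
\begin{equation*}
q_n(t) = \sum_{p \in \mathcal{P}_{n,+}} (-1)^{s(p)} \text{Val}_t(p).
\end{equation*}

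Next, partition $\mathcal{P}_{n,+}$ by the sets $\text{brk}(\text{Perm}([\vartheta]))$ and repeat the conjugate-cut cancellation from the proof of Lemma \ref{hard lemma}. That cancellation rests only on the independence of $\Lambda$ from the cut variables, and since $\Lambda$ carries no $t$-dependence whatsoever, the change-of-index step goes through verbatim at finite $t$. One obtains an analog of (\ref{fully canceld}) in which each parenthesis $(x)_i$ is a product of finite differences of $f$ rather than of derivatives. The mean value theorem applied to the trigonometric polynomial $f$ gives a bound of the same shape as (\ref{bound after fourier}), uniform over $|t| \le t^*$ for any fixed $t^*$. Because $D_t g \to \partial_- g$ pointwise for smooth $g$ (carrying the correct $\lambda_-^{q(v)}$ factor from $\partial_-(f \circ S_0^q) = \lambda_-^q (\partial_- f) \circ S_0^q$), each summand of the canceled series converges termwise to its counterpart in $\text{Val}[q_n(0)]$; dominated convergence then delivers $\lim_{t \to 0} q_n(t) = \text{Val}[q_n(0)]$.

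The main obstacle lies in the tree reformulation itself: the telescoping product rule is asymmetric, so $\text{Val}_t(p)$ is not literally a product of single-point evaluations — some factors sit at $\psi$ and others at $\psi + tv_-$. One must check carefully that this asymmetric shift pattern is compatible with the change-of-index identity that drives the conjugate-cut cancellation, i.e.\ that no $t$-dependent cross-term spoils the matching of conjugate cuts. Once this bookkeeping is verified the uniform bound and the passage to the limit are routine consequences of $f$ being a trigonometric polynomial.
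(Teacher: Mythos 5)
Your proposal follows essentially the same route as the paper: rewrite $q_n(t)$ via the telescoping product rule as $\sum_{p \in \mathcal{P}_{n,+}} (-1)^{s(p)}\text{Val}_t(p,\psi)$ with a finite-difference ``derivative node'' (the paper fixes the asymmetric evaluation pattern by a breadth-first ordering), rerun the regrouping and conjugate-cut cancellation of Lemma~\ref{hard lemma}, and pass to the limit by dominated convergence. The one point you flag as the main obstacle --- that the asymmetric $\psi$ versus $\psi+tv_-$ evaluations prevent the cancellation from holding verbatim at finite $t$ --- is exactly what the paper addresses by expanding $\text{Val}_t(p,\psi)=\text{Val}(p,\psi)+O(t)$ via analyticity in $t$, so that the cancellation need only be performed up to order $t$ before sending $t\to 0$.
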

\begin{proof}
     First we find a smarter way to write $q_n(t)$ so our previous tree cancellations becomes relevant. If $\vartheta$ is a derivative tree with derivative label on the node $d$, let $\leq$ be the ordering on $V(\vartheta)$ given by the Breadth First Search algorithm. \cite{alg} (Note that the ordering is unique for a fixed tree). Now we set 
    \[\text{Val}_t(\vartheta,\psi)=  \prod_{\substack{w \in V(\vartheta) \\ w < d}}\text{Val}(w,\psi) \cdot \left[\frac{\text{Val}(d,\psi+tv_-)-\text{Val}(d,\psi)}{t}\right] \cdot \prod_{\substack{w \in V(\vartheta) \\ w > d}}\text{Val}(w,\psi+tv_-).\]
Note that
    \begin{align}
         V_{\alpha}^{(m)} (\psi, t) &=\frac{h_{\alpha}^{(m)}(\psi+tv_-)-h_{\alpha}^{(m)}(\psi)}{t} =\sum_{\vartheta \in \Theta_{m,\alpha}}\frac{\text{Val}(\vartheta,\psi+tv_-)-\text{Val}(\vartheta,\psi)}{t} \notag \\
        &= \sum_{\vartheta \in \Theta_{m,\alpha}} \frac{1}{t} \left (\prod_{v \in V(\vartheta)} \text{Val}(v,\psi+tv_-)- \prod_{v \in V(\vartheta)} \text{Val}(v,\psi)\right) = \sum_{\theta \in \partial(\Theta_{m,\alpha})} \text{Val}_t (\theta, \psi).
    \end{align}
     If $p=\vartheta_1 \times \cdots \times \vartheta_n$ is a product tree, we set $\text{Val}_t(p, \psi):=\prod_{i=1}^{n}\text{Val}_t(\vartheta_i,\psi)$. The key observation is that, at $\psi \in \mathbb{T}^2,$
    \begin{equation}
    \label{q_n(t) formula}
        q_n(t)= \sum_{p \in \mathcal{P}_{n,+}} (-1)^{s(p)}\text{Val}_t(p,\psi).   
    \end{equation}
    (Compare (\ref{tree representation for q_0})).  Note that for $p\in \mathcal{P}_{n,+} $, $v\in V(p)$, since the F contribution $F_v$ on $v$ has the property that $t \rightarrow \text{Val}(v,\psi+tv_-)$ is analytic in $t$, we have
    \begin{equation}
    \label{estimate}
        \frac{\text{Val}(v,\psi+tv_-)-\text{Val}(v,\psi)}{t}=\partial_-\text{Val}(v,\psi)+\sum_{i=1}^{\infty}\frac{t^{i}}{(i+1)!}\partial_-^{i+1} \text{Val}(v,\psi).
    \end{equation}
     Clearly the second term on the right hand side is of order $t$ as $t\rightarrow 0$. If adapt the common notation $O(t)$ to represent any function of order $t$ as $t \rightarrow 0$ (i.e., there exist an $\epsilon>0$, $C \in [0,\infty)$ so that for every $t \in (-\epsilon, \epsilon)$, $|O(t)/t| \leq C $), then
    \begin{align}
    \label{Val_t}
        \text{Val}_t(p,\psi) &=\left[\partial_- F(\psi)+O(t)\right] \cdot \prod_{\substack{w \in V(\vartheta) \\ w < d}}\text{Val}(w,\psi) \cdot \prod_{\substack{w \in V(\vartheta) \\ w > d}}\text{Val}(w,\psi+tv_-) \notag \\
        & = \left[\partial_- F(\psi)+O(t)\right] \cdot \prod_{\substack{w \in V(\vartheta) \\ w \neq d}} \left [\text{Val}(w,\psi) + O(t) \right] \notag \\
        & =\text{Val}(p,\psi)+O(t),
    \end{align} 
    thus $q_n(t)=\sum_{p \in \mathcal{P}_{n,+}} (-1)^{s(p)} [\text{Val}(p,\psi)+O(t)]$. Now we perform the exact procedure of re-grouping, re-indexing, and cancellation (up to order $t$) as did in Lemma 4 to the series $q_n(t)$, and then send $t$ to 0. It is clear that the result we get is equal to $\text{Val}[q_n(0)]$, because whenever we are doing an change of index, the series on which this is performed always converges absolutely, so we do not run into issues. Moreover, the change of limit and summation is due to the Dominated Convergence Theorem.
\end{proof}

The final step to finish the proof of theorem \ref{main theorem} is that one have to show the series $q(\epsilon, t)=\sum_{i=1}^{\infty}\varepsilon ^i q_i(t)$ converges within some range $t \in (0, t^*)$ for which the radius of convergence is some uniform value $D.$ This has to be done in order to ensure the procedure of taking $t$ to 0 in $q(\epsilon, t)$ is valid, and gives $v_{\varepsilon}(\psi)=\sum_{k=1}^{\infty} \varepsilon^k \text{Val}[q^{(k)}(0)]$. This could be easily done by bounding the remainder $q_n(t)-\text{Val}(q_n(0))$ using the method as described in Proposition \ref{converge prop}.

\section{The Three Dimensional Case}
We conclude our discussion with some remarks about the three-dimensional case. Our method of formal series and product trees gives a good method for finding the tangent vector field of the stable and unstable manifold of the analytic perturbation of a special class of linear automorphisms on $\mathbb{T}^2$. On $\mathbb{T}^3$, however, some of the constructions we had is not valid. We first make some definations.
\begin{definition}
    A diffeomorphism $S: \mathbb{T}^3 \rightarrow \mathbb{T}^3$ is said to be $\textbf{partially hyperbolic}$ if there exists three 1-dimensional vector bundles $E^u,E^c,E^s$ that is invariant under $DS$ with the property that \\
    \indent (i) $T \mathbb{T}^3 = E^u \oplus E^c\oplus E^s$; \\
    \indent (ii) There are continuous functions $\sigma, \mu: \mathbb{T}^3 \rightarrow \mathbb{R}$ such that $0<\sigma <1<\mu$ and for every $p\in \mathbb{T}^3$ and unit vectors $v^* \in E^*(p)$ where $*=s,c,u$, one has $||Df(v^s)|| < \sigma(p) < ||Df(v^c)||<\mu(p)<||Df(v^u)||$.
\end{definition}
As an example \cite{GogolevMaimonKolmogorov2019}, consider the automorphism given by the following matrix with determinant 1:
\[
A = \begin{pmatrix}
2 & 1 & 0 \\
1 & 2 & 1 \\
0 & 1 & 1 \\
\end{pmatrix}
\]
whose eigenvalues are approximately 0.20, 1.55, and 3.25. $A$ could be viewed as a partially hyperbolic system with expanding center distribution. In fact, $A$ is also Anosov, and one could construct, as did in Proposition \ref{conjugation theorem}, the conjugation function  $H_{\varepsilon}$ between $A$ and $A_{\varepsilon}:=A+\varepsilon f.$ ($f$ is again a trigonometric polynomial.) If we denote $v^*$, $ *=s,c,u$ the three normalized eigenvectors, then it is easy to see that $\{H_{\varepsilon}(\psi+tv^s) \pmod {2\pi}: t\in \mathbb{R} \}$ is the stable manifold and $\{H_{\varepsilon}(\psi+tv^u+sv^c) \pmod {2\pi}: t,s \in \mathbb{R} \}$ is the unstable manifold of $A_{\varepsilon}$. Thus they are dense in $\mathbb{T}^3$ as it is the image of a dense set under a continous map. \\
\indent The same method described in this paper shows that one could "differentiate" the parameterization $t \mapsto H_{\varepsilon}(\psi+tv^s)$ and $t \mapsto H_{\varepsilon}(\psi+tv^c),$ but fails for the parameterization $t \mapsto H_{\varepsilon}(\psi+tv^u)$. It is known from \cite{RenGanZhang2017} that there exist foliations $W^s$ and $W^u$ tangent to $E^s$ and $E^u$ for $A_{\varepsilon}$, which suggest that the parametrization $t \mapsto H_{\varepsilon}(\psi+tv^u)$ can not be a parametrization of $W^u$. This suggests that one can not naively expect that the image, under the conjugation, of the invariant foliations for the unperturbed system gives the invariant foliations for the perturbed one. In fact, this happens precisely if $E^s \oplus E^u$ is integrable \cite{RenGanZhang2017, GanShi2019}.

    

\bibliographystyle{ieeetr}  
\bibliography{references}  

\begin{thebibliography}{1}

\bibitem{LinearOperators}
G.~Gallavotti, F.~Bonetto, and G.~Gentile, {\em Aspects of Ergodic, Qualitative and Statistical Theory of Motion}.
\newblock Springer, 2004.
\newblock ISBN:978-3-642-07416-5, p.360.

\bibitem{wen}
L.~Wen, {\em Differentiable Dynamical Systems: An Introduction to Structural Stability and Hyperbolicity}, vol.~173.
\newblock Graduate Studies in Mathematics, 2016.
\newblock ISBN: 978-1-4704-2799-3.

\bibitem{cooper2021}
B.~Cooper, ``Structural stability of anosov diffeomorphisms,'' {\em University of Chicago REU}, August 2021.
\newblock Available at \url{https://math.uchicago.edu/~may/REU2021/REUPapers/Cooper.pdf}.

\bibitem{Wood1992}
D.~C. Wood, ``The computation of polylogarithms,'' Technical Report 15-92, University of Kent Computing Laboratory, Canterbury, UK, June 1992.

\bibitem{Folland}
G.~B. Folland, {\em Real Analysis: Modern Techniques and Their Applications, 2ed Edition}.
\newblock John Wiley Sons, Inc, 1999.
\newblock ISBN:0-471-31716-0.

\bibitem{alg}
T.~H. Cormen, C.~E.Leiserson, R.~L.Rivest, and C.~Stein, {\em Introduction to Algorithms. 4th ed.}
\newblock MIT Press, 2022.
\newblock ISBN: 978-0-2620-4630-5.

\bibitem{GogolevMaimonKolmogorov2019}
A.~Gogolev, I.~Maimon, and A.~N. Kolmogorov, ``A numerical study of gibbs u-measures for partially hyperbolic diffeomorphisms on \( \mathbb{T}^3 \),'' {\em Experimental Mathematics}, vol.~28, no.~3, pp.~271--283, 2019.

\bibitem{RenGanZhang2017}
Y.~Ren, S.~B. Gan, and P.~F. Zhang, ``Accessibility and homology bounded strong unstable foliation for anosov diffeomorphisms on 3-torus,'' {\em Acta Mathematica Sinica, English Series}, vol.~33, no.~1, pp.~71--76, 2017.

\bibitem{GanShi2019}
S.~Gan and Y.~Shi, ``Rigidity of center lyapunov exponents and su-integrability,'' {\em arXiv preprint arXiv:1905.07896}, 2019.
\newblock Available at \url{https://arxiv.org/abs/1905.07896}.

\end{thebibliography}
\newpage

\section{Appendix}

\begin{lemma}
Given sequences $\{a_n\}$ and $\{b_n\}$, where $a_0 = b_0 = 1$. We define a sequence $\{q_n\}$ such that $q_0 = 1$, and satisfying the formal power series product
$$\left(\sum_{k = 0}^{\infty} \epsilon^{k}b_k \right)\left(\sum_{k = 1}^{\infty} \epsilon^{k}q_k \right) = \left (\sum_{k = 1}^{\infty} \epsilon^{k}a_k \right).$$
Then, for $n \in \mathbb{N}$:
$$q_n = a_n - \sum_{d = 1}^{n-1} q_db_{n-d}$$
\end{lemma}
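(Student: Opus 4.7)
The plan is to directly compute the Cauchy product on the left-hand side and equate coefficients of $\epsilon^n$ on both sides for each $n \geq 1$. Because the first factor starts at $k=0$ with $b_0 = 1$ while the second factor starts at $k=1$ (so it has no constant term in $\epsilon$), the product begins at order $\epsilon^1$, which matches the right-hand side; no consistency check at order $\epsilon^0$ is needed.

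First I would expand the product using the standard Cauchy product formula, obtaining
$$\left(\sum_{k=0}^{\infty} \epsilon^k b_k\right)\left(\sum_{k=1}^{\infty} \epsilon^k q_k\right) = \sum_{n=1}^{\infty} \epsilon^n \sum_{d=1}^{n} q_d\, b_{n-d},$$
where the lower bound $d=1$ in the inner sum reflects the absence of an $\epsilon^0$ term in the $q$-series. Equating coefficients of $\epsilon^n$ with the right-hand side yields the identity $\sum_{d=1}^{n} q_d\, b_{n-d} = a_n$ for every $n \geq 1$. Isolating the $d=n$ term, which contributes $q_n b_0 = q_n$ since $b_0 = 1$, and moving the remaining terms to the right gives the claimed recursion.

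There is essentially no obstacle here: the argument is purely formal, so no convergence question arises, and the only step that requires care is the book-keeping of index ranges in the Cauchy product. It is worth remarking that the recursion inductively defines the entire sequence $\{q_n\}_{n \geq 1}$ from the initial data, since the right-hand side depends only on $q_1, \ldots, q_{n-1}$ together with the given $a_k$ and $b_k$; this confirms that the sequence $\{q_n\}$ referenced throughout the paper is in fact well-defined by the stated power-series identity.
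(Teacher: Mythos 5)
Your proposal is correct and follows essentially the same route as the paper's proof: expand the Cauchy product, equate coefficients of $\epsilon^n$, and isolate the $d=n$ term using $b_0=1$. Your version is simply a more carefully written account of the same coefficient-matching argument.
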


\begin{proof}
Equating terms with the same coefficient of $\epsilon$, 
$$\epsilon^k q_n + \epsilon^k \sum_{d = 1}^{n-1}q_d b_{n-d} = \epsilon^k a_n$$
Cancelling out $\epsilon^k$ and simplifying gives the desired result.
\end{proof}

\begin{lemma}
\label{explicit formula for power series}
With the same notation as in the previous lemma, the explicit formula for $q_n$ is 
$$q_n = a_n + \sum_{k = 1}^{n - 1}a_k \left(\sum_{s = 0}^{n - k}(-1)^s \sum_{\substack{m_1 + ... + m_s = n - k \\ m_1,...,m_s \geq 1}} (b_{m_1} b_{m_2} ... b_{m_s}) \right).$$
\end{lemma}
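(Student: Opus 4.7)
My plan is to derive the closed form directly from the product identity that defines $\{q_n\}$, rather than induct on the recursion of the previous lemma (although that second route is also available as a backup). The idea is to work at the level of formal power series: set
$$A(\epsilon)=\sum_{k\geq 1}\epsilon^{k}a_{k},\qquad B(\epsilon)=\sum_{k\geq 1}\epsilon^{k}b_{k},\qquad Q(\epsilon)=\sum_{k\geq 1}\epsilon^{k}q_{k}.$$
Since $b_{0}=1$, the hypothesis $\bigl(\sum_{k\geq 0}\epsilon^{k}b_{k}\bigr)Q(\epsilon)=A(\epsilon)$ reads $(1+B(\epsilon))Q(\epsilon)=A(\epsilon)$, and because $B$ has no constant term the series $1+B$ is invertible in $\mathbb{Q}[[\epsilon]]$. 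Therefore $Q(\epsilon)=A(\epsilon)\cdot(1+B(\epsilon))^{-1}$.

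Next, I would expand the inverse via the formal geometric series
$$\frac{1}{1+B(\epsilon)}=\sum_{s=0}^{\infty}(-1)^{s}B(\epsilon)^{s},$$
which makes sense termwise because only finitely many summands contribute to each coefficient of $\epsilon^{n}$ (each factor of $B$ carries at least one power of $\epsilon$, so $B^{s}$ starts at degree $s$). Expanding $B(\epsilon)^{s}$ as
$$B(\epsilon)^{s}=\sum_{m_{1},\dots,m_{s}\geq 1}\epsilon^{\,m_{1}+\cdots+m_{s}}\,b_{m_{1}}\cdots b_{m_{s}},$$
and multiplying by $A(\epsilon)=\sum_{k\geq 1}\epsilon^{k}a_{k}$, the coefficient of $\epsilon^{n}$ in $Q(\epsilon)$ becomes
$$q_{n}=\sum_{k=1}^{n}a_{k}\sum_{s=0}^{n-k}(-1)^{s}\sum_{\substack{m_{1}+\cdots+m_{s}=n-k\\ m_{1},\dots,m_{s}\geq 1}}b_{m_{1}}\cdots b_{m_{s}}.$$
Here I would carefully separate the $k=n$ term from the rest: when $k=n$ we need $n-k=0$, so only $s=0$ survives (yielding the empty product $1$), contributing exactly $a_{n}$; for $1\leq k\leq n-1$ the inner double sum is nonempty for $1\leq s\leq n-k$, matching the statement.

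The only thing to check is the edge-case convention at $s=0$ with $k<n$: the constraint $m_{1}+\cdots+m_{0}=n-k$ fails (since $n-k>0$), so this term is zero and harmlessly absorbed into the displayed formula. No deep estimate is required; the step that needs the most care is the bookkeeping that no contributions are double-counted and that the $k=n$ piece is precisely $a_{n}$. If instead one prefers an induction, then from the recursion $q_{n}=a_{n}-\sum_{d=1}^{n-1}q_{d}b_{n-d}$ of the previous lemma, substituting the inductive formula for $q_{d}$ and re-indexing the nested sums (grouping by the new composition length $s+1$) reproduces the same identity; either proof is routine.
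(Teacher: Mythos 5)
Your proof is correct, but it takes a genuinely different route from the paper. The paper proceeds by induction on $n$ using the recursion $q_n = a_n - \sum_{d=1}^{n-1} q_d b_{n-d}$ established in the previous lemma, and the combinatorial bookkeeping is handled by a sub-lemma decomposing the multiset $P(n)$ of ordered partitions (compositions) as $\bigcup_k \bigl(P(n-k)+k\bigr)$. You instead work directly with the defining identity $(1+B(\epsilon))Q(\epsilon)=A(\epsilon)$ in the ring of formal power series, invert $1+B$ by the formal geometric series $\sum_{s\ge 0}(-1)^s B(\epsilon)^s$ (legitimate since $B$ has zero constant term, so each coefficient of the inverse is a finite sum), and read off the coefficient of $\epsilon^n$. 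Your treatment of the edge cases is the right one: the $k=n$ term forces $s=0$ and contributes exactly $a_n$ via the empty product, while for $k<n$ the $s=0$ term vanishes because there is no empty composition of a positive integer. What your approach buys is conceptual transparency and brevity --- the alternating sum over compositions is revealed as nothing but the Neumann series for $(1+B)^{-1}$, and uniqueness of $q_n$ is immediate from invertibility in $\mathbb{Q}[[\epsilon]]$ rather than from the recursion. What the paper's approach buys is self-containedness at a more elementary level (no appeal to inversion of formal power series) and a direct verification that the closed form satisfies the recursion actually used elsewhere in the text. Either argument is complete; yours is the slicker of the two.
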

\begin{proof}
We will prove by induction. The base case is trivial; for both the recursive formula and the explicit formula, the large sums in both equations become $0$, and we are left with $q_1 = a_1$. For the inductive step, for any $n \in \mathbb{N}$ let us assume that:
$$q_d = a_d + \sum_{k = 1}^{d - 1}a_k \left(\sum_{s = 0}^{d - k}(-1)^s \sum_{\substack{m_1 + ... + m_s = d - k \\ m_1,...,m_s \geq 1}} (b_{m_1} b_{m_2} ... b_{m_s}) \right)$$
For all $d < n$. We now wish to prove that the explicit formula is true for $q_n$.
We do this by substituting our inductive hypothesis into our recursive formula:
\begin{align}
    q_n & = a_n - \sum_{d = 1}^{n-1} q_db_{n-d} \notag \\
    & = a_n - \sum_{d = 1}^{n-1} b_{n-d} \left(a_d + \sum_{k = 1}^{d - 1}a_k \left(\sum_{s = 0}^{d - k}(-1)^s \sum_{\substack{m_1 + ... + m_s = d - k \\ m_1,...,m_s \geq 1}}  (b_{m_1} b_{m_2} ... b_{m_s}) \right) \right) \notag
\end{align}
The above formula looks downright repugnant. In order to prove it, we require another sub-lemma, whose proof is quite trivial:
\begin{sub-lemma}
Let $P(n)$ be the multiset (a set that allows repetitions) of all ordered partitions of $n$, written as products of $b_i$ (like in the sum). For example: \\
$$P(3) = \{b_1 b_1 b_1, b_1 b_2, b_2 b_1, b_3\}$$
We will also define $P(0) = \{1\}$.\\
We define the multiset $P(n) + k$ as
$$P(n) + k = \{b_k a | a \in P(n)\}$$
For example, $P(3) + 2 = \{b_2 b_1 b_1 b_1, b_2 b_1 b_2, b_2 b_2 b_1, b_2b_3\}$.
Then:
$$P(n) = (P(n - 1) + 1)) \cup ((P(n - 2) + 2) \cup ... \cup (P(1) + (n-1)) \cup (P(0) + n)$$
From this fact, we can imply that
$$\sum_{k \in P(n)} k = \sum_{k \in (P(n - 1) + 1)}k \quad + \sum_{k \in (P(n - 2) + 2)} k \quad + ...+ \sum_{k \in (P(1) + (n -1))} k.$$
\end{sub-lemma}
We are ready to finish our proof. First, we note that 
$$\left(\sum_{s = 0}^{d - k}(-1)^s \sum_{\substack{m_1 + ... + m_s = d - k \\ m_1,...,m_s \geq 1}} (b_{m_1} b_{m_2} ... b_{m_s})\right)=\sum_{w \in P(d - k)} \alpha w,$$
where $\alpha = 1$ if the number of $b_i$ in $w$ is even, and $-1$ otherwise. Substituting this expression into our large equation for $q_n$, we finally have:
$$q_n = a_n - \sum_{d = 1}^{n-1} b_{n-d} \left(a_d + \sum_{k = 1}^{d - 1}a_k \left(\sum_{w \in P(d - k)} \alpha w \right)\right)$$
Distributing the term $\sum_{d = 1}^{n-1} b_{n - d}$, we get:
\begin{align}
    q_n &= a_n -  \left(\sum_{d = 1}^{n-1} b_{n-d} a_d + \left(\sum_{d = 1}^{n-1}  \left(\sum_{k = 1}^{d - 1}b_{n-d} a_k \left(\sum_{w \in P(d - k)} \alpha w \right) \right) \right) \right) \notag \\
    & = a_n -  \left(\sum_{d = 1}^{n-1} b_{n-d} a_d + \left(\sum_{k = 1}^{n-1} \left(\sum_{d =  k + 1}^{n - 1} a_k b_{n-d} \left(\sum_{w \in P(d - k)} \alpha w \right) \right) \right) \right) \notag
\end{align}
In the term $(\sum_{d =  k}^{n - 1} b_{n-d} (\sum_{w \in P(d - k)} \alpha w))$, we are multiplying each $w$ by a $b_{n - d}$. This turns the sum into the following:
$$\sum_{d =  k + 1}^{n - 1} \left(\sum_{w \in (P(d - k) + (n - d))} \alpha w \right)=\sum_{w \in P(n - k)} \alpha w - b_{n-k},$$
Since we have incremented the number of $b_i$ in $w$ by one, $\alpha = 1$ when the number of $b_i$ is odd, and $-1$ otherwise. Since our index does not start at $d = k$, and thus we do not get the set $P(0) + (n - k) = \{b_{n-k}\}$.Plugging it back into our equation, we get:
\begin{align}
    q_n &= a_n -  \left(\sum_{d = 1}^{n-1} b_{n-d} a_d + \left(\sum_{k = 1}^{n-1} a_k \left(\sum_{w \in P(n - k)} \alpha w - b_{n-k} \right) \right) \right) \notag \\
    &=a_n -  \left(\sum_{d = 1}^{n-1} b_{n-d} a_d - \sum_{k = 1}^{n-1} a_kb_{n-k} + \sum_{k = 1}^{n-1} a_k \left(\sum_{w \in P(n - k)} \alpha w \right)\right) \notag \\
    &=a_n -  \left(\sum_{k = 1}^{n-1} a_k \left(\sum_{w \in P(n - k)} \alpha w \right) \right) \notag
\end{align}

We multiply the negative sign in so that, again, $\alpha = 1$ when the number of $b_i$ in $w$ are even, and $-1$ otherwise. Re-expanding, we finally get:
$$q_n = a_n + \sum_{k = 1}^{n - 1}a_k \left(\sum_{s = 0}^{n - k}(-1)^s \sum_{\substack{m_1 + ... + m_s = n - k \\ m_1,...,m_s \geq 1}} (b_{m_1} b_{m_2} ... b_{m_s}) \right)$$ 
\end{proof}

\end{document}